\documentclass[12pt, reqno]{amsart} 
\usepackage{amsmath, amssymb, amsthm} 
\usepackage{geometry} 
\usepackage{enumitem} 
\usepackage{hyperref} 
\hypersetup{
    colorlinks=true,
    linkcolor=blue,
    citecolor=blue,
    urlcolor=blue
}
\usepackage[backend=biber, style=numeric, sorting=nyt]{biblatex} 

\usepackage{tikz}
\usetikzlibrary{arrows.meta}

\theoremstyle{plain} 
\newtheorem{theorem}{Theorem}[section]
\newtheorem{lemma}[theorem]{Lemma}
\newtheorem{proposition}[theorem]{Proposition}

\theoremstyle{definition} 
\newtheorem{definition}[theorem]{Definition}

\newtheorem{assumption}[theorem]{Assumption}

\theoremstyle{remark} 
\newtheorem{remark}[theorem]{Remark}

\begin{document}

\title[SHORT TITLE FOR HEADER]{Conformal product structures on $S^3$}

\author{Xianfeng JIANG}
\address{Laboratoire mathématique d'Orsay, Université Paris-Saclay, 91405 Orsay, France}
\email{xianfeng.jiang@universite-paris-saclay.fr}


\subjclass[2020]{Primary 53C18; Secondary 53C35, 53C20}
\keywords{Local conformal product structures, Weyl connections}

\begin{abstract}
 We construct a conformal product structure on $S^3$ using the Reeb foliation. This is the first example of a conformal product structure on a compact simply connected manifold which is not built on a global product manifold.
\end{abstract}

\maketitle
\pagestyle{plain}

\section{Introduction}

On every Riemannian manifold $(M,g)$, there is a unique torsion-free linear connection $\nabla$ compatible with the Riemannian metric, called the Levi-Civita connection. More generally, on a conformal manifold $(M,c)$, where $c$ is a conformal class of Riemannian metrics, a torsion-free linear connection $D$ compatible with the conformal structure $c$ is called a Weyl connection \cite{weylraumzeitmaterie}. With respect to a metric $g\in c$, the Weyl connections are in one-to-one correspondence with 1-forms on $M$ (see \cite{gauduchon1995weyleinstein}). The corresponding 1-form is called the
Lee form of the Weyl connection with respect to $g$.

A Weyl connection is called closed or exact if its Lee form with respect to some metric in $c$ is closed or exact. This definition does not depend on the choice of the metric in the conformal class and it is easy to see that a Weyl connection is exact (or closed) if and only if it is globally (or locally) the Levi-Civita connection of a metric in $c$.

A Weyl connection $D$ on a smooth manifold $M$ is called reducible if its holonomy representation is decomposable, or, equivalently, if the tangent bundle decomposes into a direct sum of two non-trivial $D$-parallel subbundles $TM=T_1\oplus T_2$. In conformal geometry, a conformal product structure on a Riemannian manifold $(M, g)$ is defined by a Weyl connection that preserves a pair of orthogonal, complementary distributions. 

The importance of conformal product structures stems from the Merkulov-Schwachh{\"o}ofer classification \cite{merkulovschwachhofer1999} of possible holonomy groups of torsion-free connections. Indeed, when restricted to the special case of Weyl connections, the main result in \cite{merkulovschwachhofer1999} has the following consequence: if $D$ is a non-closed Weyl connection on a conformal manifold $(M,c)$ of dimension different from $4$, then either the holonomy of $D$ is the whole $CO(n)$, or $D$ is reducible (whence defines a conformal product structure).

A reducible Weyl connection which is closed but not exact is called a locally conformal product structure (LCP) \cite{flamencourt2024lcp}. The study of closed Weyl structures on a conformal manifold $M$ can be better understood in terms of the universal cover $\tilde{M}$. Indeed, the lift of a closed Weyl structure $D$ to $\tilde{M}$ is exact, meaning that it is the Levi-Civita connection of a Riemannian metric $h_D$ on $\tilde{M}$. A global structure theorem obtained by Matveev and Nikolayevsky \cite{matveevnikolayevsky2017} in the analytic case and by Kourganoff \cite{kourganoff2019similarity} in the general, smooth case, says that if a closed, non-exact Weyl structure $D$ on a compact conformal manifold $(M,c)$ is non-flat and has reducible holonomy, then the Riemannian manifold $(\tilde{M},h_D)$ is isometric to the Riemannian product $\mathbb{R}^q\times(N,g_N)$ where $\mathbb{R}^q$ (the flat part) is a Euclidean space and $(N,g_N)$ (the non-flat part) is an irreducible, non-complete manifold. 

For the non-closed case, the only known examples of conformal product structures are constructed on products $M=M_1\times M_2$ of two smooth manifolds. They are defined by a Riemannian metric of the form
\[
 g=e^{2f_1}g_1+e^{2f_2}g_2,
\]
where $g_i$ are Riemannian metrics on $M_i$ and $f_i$ are smooth functions on $M_1\times M_2$ for $i\in\{1,2\}$, and by a reducible Weyl connection $D$, with Lee form $-\text{d}^{M_1}f_2-\text{d}^{M_2}f_1$ with respect to $g$. In this case, $(M,[g],D)$ is called a global conformal product manifold. Conversely, every manifold with conformal product structure can be written locally in this way (c.f. Prop 2.3 of \cite{MoroianuPilca2025}). So it is natural to ask whether there is any conformal product structure on a compact simply connected manifold which is not a global product. The present article is devoted to answering this question positively. Prior to our work, (non-closed) conformal product structures have been studied on Riemannian manifolds with
special holonomy: in \cite{belgunflamencourtmoroianu2025weyl}, \cite{MoroianuPilca2025} for the cases of Kähler manifolds, in \cite{moroianupilca2024einstein} and \cite{conformaleinstein} for the
cases of Einstein manifolds, in \cite{moroianupilca2025reducible} for the case of reducible Riemannian manifolds and in \cite{jiang2026} for the case of constant sectional curvature manifolds.

The Reeb foliation, introduced by Georges Reeb in 1952 \cite{reeb1952sur},  is a foliation of codimension 1 on $S^3$ whose leaves are all diffeomorphic to $\mathbb{R}^2$ except for a compact leaf which is a torus. The torus leaf divides $S^3$  into two solid tori—known as Reeb components—glued along a common compact boundary leaf isomorphic to a torus $T^2$. 

In this paper, we explicitly construct a conformal product structure on $S^{3}$ whose associated codimension-1 foliation is precisely the Reeb foliation. Geometrically, the existence of a conformal product structure is strictly equivalent to the condition that the integral manifolds of both distributions are totally umbilical. Since 1-dimensional foliations are trivially totally umbilical, constructing a conformal product structure associated with the Reeb foliation reduces to finding a global smooth metric on $S^3$ under which the Reeb leaves are totally umbilical. 

Our construction strategy proceeds as follows: First, we endow the interior of each solid torus with a rotationally symmetric Riemannian metric. By enforcing the totally umbilical condition on the leaves, we derive the governing differential equations for the metric components. The primary technical difficulty lies in ensuring the smooth extension of this metric across the geometric singularities: the core circles of the solid tori and the compact boundary torus $T^2$. We find the sufficient conditions (for the functions used to define the foliation and the metric) which can ensure the smoothness and umbilical property of the metric and Reeb foliation on $S^3$. Then finally, we give a concrete construction of the Reeb foliation and a metric satisfying all the conditions we propose.


The paper is organized as follows: Section 2 introduces the explicit coordinate parameterization of the Reeb foliation on $S^3$ and derives the equivalent geometric conditions for conformal product structures, i.e., all the leaves of $\mathcal{F}_1$ and $\mathcal{F}_2$ are umbilical. In section 3 we recall some useful classical lemmas and establish the rigorous $C^\infty$ smoothness conditions for the Reeb foliation across the compact leaf $T^2$. Section 4 explicitly derives the governing differential equations for the metric components which are equivalent to the umbilical property on the single solid torus $(D^2 \times S^1,g)$ and finds the sufficient conditions to resolve the metric singularity at the core circle. Section 5 finds sufficient conditions which can ensure that the metric on the interiors of Reeb components $\mathring{V}_1$ and $\mathring{V}_2$ (which are isomorphic to $(D^2\times S^1,g)$) can be extended to a smooth metric $\tilde{g}$ on $S^3$ and establishes that the compact leaf $T^2$ is also umbilical with respect to $\tilde{g}$. Section 6 gives a precise construction of Reeb foliation and the smooth metric $g$ on $S^3$ and establishes that all the conditions we need are satisfied.

\subsection*{Acknowledgments} The author would like to thank Andrei Moroianu for his guidance and support. This work was financially supported by a public doctoral contract from Université Paris-Saclay, via the École Doctorale de Mathématiques Hadamard (EDMH).

\section{Preliminaries}

\subsection{Reeb Foliation}

We begin with an explicit description of the Reeb foliation. We consider $S^3$ defined as a closed submanifold of $\mathbb{R}^4$ by:
\[
S^3=\{(x_1,x_2,x_3,x_4)\in\mathbb{R}^4\ |\ x_1^2+x_2^2+x_3^2+x_4^2=1\}.
\]
We denote $z_1=x_1+ix_2$ and $z_2=x_3+ix_4$, then $S^3=\{(z_1,z_2)\in\mathbb{C}^2\ |\ |z_1|^2+|z_2|^2=1\}$. It can be decomposed into two solid tori $V_1$ and $V_2$, which are called Reeb components:
\[
V_1=\{(z_1,z_2)\in S^3\ |\ |z_1|^2\ge\frac12\},\quad V_2=\{(z_1,z_2)\in S^3\ |\ |z_2|^2\ge\frac12\}.
\]
The two components $V_1$ and $V_2$ are closed subsets of $S^3$, their boundaries are 
\[
\partial V_1=\partial V_2=\{(z_1,z_2)\in S^3\ |\ |z_1|^2=|z_2|^2=\frac12\}\simeq S^1\times S^1.
\]
We denote the common boundary by $T^2$, and the central slice of $V_i$ is $C_i:=\{(z_1,z_2)\in S^3\ |\ |z_i|=1\}$ for $i=1,2$, which are two embedded submanifolds in $S^3$ diffeomorphic to the circle $S^1$. We denote the interior of $V_1$, $V_2$ by $\mathring{V_1}$ and $\mathring{V_2}$ respectively.

The interiors of the Reeb components are diffeomorphic to the solid torus $\mathring{D}^2\times S^1$.  We consider two maps: 
\begin{equation*}
\begin{aligned}
\Psi_1:\;
\begin{array}{rcl}
\mathring{V_1} & \longrightarrow & \mathring{D}^2\times S^1 \\
(z_1,z_2) & \longmapsto &
\left(\sqrt2 z_2,\dfrac{z_1}{|z_1|}\right)
\end{array}
\qquad
\Psi_2:\;
\begin{array}{rcl}
\mathring{V_2} & \longrightarrow & \mathring{D}^2\times S^1 \\
(z_1,z_2) & \longmapsto &
\left(\sqrt2 z_1,\dfrac{z_2}{|z_2|}\right)
\end{array}
\end{aligned}
\end{equation*}
 One can establish that $\Psi_1$ and $\Psi_2$ are diffeomorphisms, their inverse maps are: 
\begin{equation*}
\begin{aligned}
\Phi_1:\;
\begin{array}{rcl}
\mathring{D}^2\times S^1 & \longrightarrow & \mathring{V}_1 \\
(w_1,w_2) & \longmapsto &
\left(
w_2\sqrt{1-\dfrac{|w_1|^2}{2}},
\dfrac{w_1}{\sqrt2}
\right)
\end{array}
\end{aligned}
\end{equation*}
\begin{equation*}
\begin{aligned}
\Phi_2:\;
\begin{array}{rcl}
\mathring{D}^2\times S^1 & \longrightarrow & \mathring{V}_2 \\
(w_1,w_2) & \longmapsto &
\left(
\dfrac{w_1}{\sqrt2},
w_2\sqrt{1-\dfrac{|w_1|^2}{2}}
\right)
\end{array}
\end{aligned}
\end{equation*}
which induce diffeomorphisms between $C_i$ and the central circle $C:=\{0\}\times S^1$ of the solid torus $\mathring{D}^2\times S^1$. Hence both $\mathring{V_1}$ and $\mathring{V_2}$ are diffeomorphic to a solid torus $\mathring{D^2}\times S^1$. The solid torus $D^2\times S^1$ has torus coordinates $(r,\phi,\gamma)$, where $(r,\phi)$ are the polar coordinates in the disk $D^2\setminus\{0\}$ and $\gamma$ is the unit complex number parametrization of $S^1$ with $0<r<1$, $\phi\in[-\pi,\pi)$, and $\gamma\in S^1=\{z\in\mathbb{C}\ |\ |z|=1\}$. So $\mathring{V_1}$ and $\mathring{V_2}$ have coordinates induced by the torus coordinate $(r,\phi,\gamma)$ and the diffeomorphisms $\Psi_1$ and $\Psi_2$. We denote them by $(r_1,\phi_1,\gamma_1)$ and $(r_2,\phi_2,\gamma_2)$ respectively:
\begin{equation}\label{coordinates}
\begin{aligned}
(r_1,\phi_1,\gamma_1):&=(r\circ\Psi_1,\phi\circ\Psi_1,\gamma_1\circ\Psi_1)=(\sqrt2|z_2|,\text{Arg}(z_2), \frac{z_1}{|z_1|}),\quad \forall(z_1,z_2)\in V_1,\\
(r_2,\phi_2,\gamma_2):&=(r\circ\Psi_2,\phi\circ\Psi_2,\gamma_2\circ\Psi_2)=(\sqrt2|z_1|,\text{Arg}(z_1),\frac{z_2}{|z_2|}),\quad \forall(z_1,z_2)\in V_2.
\end{aligned}
\end{equation}
Hence $r_i$ are functions on $\mathring{V_i}$ with values in $[0,1)$, $\phi_i$ and $\gamma_i$ are smooth complex-valued functions from $\mathring{V_i}$ to $S^1\subset\mathbb{C}$.

The Reeb foliation is given by constructing foliations on the two solid tori separately and then gluing them. We consider the covering map of the solid torus: 
\[
\pi:D^2\times\mathbb{R}\rightarrow D^2\times S^1,\quad (r,\phi,t)\mapsto (r,\phi,e^{it}).
\]
Through this covering, $t \in \mathbb{R}/2\pi\mathbb{Z}$ serves as the local real angular coordinate on $S^1$ corresponding to the complex parameter $\gamma = e^{it}$. This naturally defines the smooth coordinate vector field $\partial_t$ and the differential 1-form $dt$ on the solid torus. Then we consider a smooth function $f:[0,1)\rightarrow\mathbb{R}$ such that the following assumption holds:
\begin{assumption}\label{assumption0}
    We assume that $f:[0,1)\rightarrow\mathbb{R}$ is a smooth function such that:\\
    i) $f'(r)>0$ for $0<r<1$,\\
    ii) $\lim_{r\rightarrow 1}f(r)=+\infty$.\\
\end{assumption}
 In the cylinder $D^2\times \mathbb{R}$, the  foliation structure defined by the level sets:
\begin{equation}\label{H}
F(r,\phi,t)=t-f(r)=c,
\end{equation}
where $c$ is an arbitrary constant, is invariant under translation in $\mathbb{R}$, so it can be projected onto the solid torus $\mathring{D}^2\times S^1$ through the covering map $\pi$. We denote it by $\mathcal{F}_f$. 
Then by gluing the two solid tori by a Dehn twist, we obtain a $S^3$ and the Reeb foliation is induced by the foliation structure on the solid tori. We denote by $\tilde{\mathcal{F}_f}$ this Reeb foliation (on $S^3$) determined by function $f$. 

\subsection{Conformal Product Structures}

Let $(M,g)$ be a Riemannian manifold which has a decomposition of the tangent bundle $\text{T}M=T_1\oplus T_2$ into two integrable distributions. We denote the foliations induced by $T_1$ and $T_2$ by $\mathcal{F}_1$ and $\mathcal{F}_2$.

\begin{definition}
    A conformal product structure on $(M, g)$ consists of a Weyl connection $D$ together with a decomposition of the tangent bundle of $M$ as $\text{T}M=T_1\oplus T_2$, where $T_1$ and $T_2$ are orthogonal $D$-parallel non-trivial distributions. The rank of a conformal product structure is defined to be the smallest of the ranks of the two distributions $T_1$ and $T_2$. A conformal product structure $D$ on $(M,g)$ is said to be orientable if the $D$-parallel distributions $T_1$ and $T_2$ are orientable. Up to a finite cover, every conformal product structure is orientable.
\end{definition}

We recall the notion of totally umbilical submanifolds and establish its relation with conformal product structures.

\begin{definition} For a submanifold $L$ in $(M,g)$, and vector fields $X,Y\in \text{T}L$, the second fundamental form is $h(X,Y):=(\nabla_XY)^{\bot}$. If there exists a normal vector field $H$ such that 
\[
h(X,Y)=g(X,Y)H,
\]
then the submanifold $L$ is called totally umbilical. An integrable distribution is called umbilical if and only if its induced submanifolds are all totally umbilical.
\end{definition}
\begin{proposition}\label{prop fond}
     There exists a Weyl connection $D$ preserving $T_1$ and $T_2$ if and only if the leaves of $\mathcal{F}_1$ and $\mathcal{F}_2$ are totally umbilical.
\end{proposition}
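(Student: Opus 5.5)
The plan is to write a general Weyl connection with respect to $g$ via its Lee form $\theta$,
\[
D_XY=\nabla_XY+\theta(X)Y+\theta(Y)X-g(X,Y)\theta^\sharp ,
\]
which is the one-to-one correspondence recalled in the introduction. Then I will translate "$D$ preserves $T_1$ and $T_2$" into conditions on the second fundamental forms of the two foliations. Since $T_2=T_1^\perp$, preserving $T_1$ is equivalent to preserving $T_2$ for a conformal connection. So it suffices to compute, for sections $X$ of $T_1$ and $Y$ of $T_1$, the $T_2$-component of $D_XY$, and symmetrically with the roles of $T_1$ and $T_2$ exchanged.

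\textbf{First step.} Take $X,Y\in\Gamma(T_1)$. The $T_1$-components of $\theta(X)Y$ and $\theta(Y)X$ vanish in $T_2$, so
\[
(D_XY)^{T_2}=h_1(X,Y)+\tfrac12(\nabla_XY-\nabla_YX)^{T_2}-g(X,Y)\theta_2^\sharp ,
\]
where $h_1$ is the second fundamental form of the leaves of $\mathcal F_1$ and $\theta_2^\sharp$ is the $T_2$-part of $\theta^\sharp$. Strictly, $(\nabla_XY)^{T_2}$ splits into a symmetric part $h_1(X,Y)$ and an antisymmetric part $\tfrac12[X,Y]^{T_2}$, and the latter vanishes by integrability of $T_1$. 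Hence $D$ preserves $T_1$ if and only if
\[
h_1(X,Y)=g(X,Y)\,\theta_2^\sharp\quad\text{for all }X,Y\in T_1 .
\]
This says exactly that the leaves of $\mathcal F_1$ are totally umbilical with mean curvature vector $H_1=\theta_2^\sharp$. The same computation for $T_2$ gives umbilicity of $\mathcal F_2$ with $H_2=\theta_1^\sharp$.

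\textbf{Necessity.} If $D$ preserves both distributions, the above yields umbilicity of both foliations directly.

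\textbf{Sufficiency.} If both foliations are umbilical with mean curvature vectors $H_1\in T_2$ and $H_2\in T_1$, I define
\[
\theta:=g(H_1+H_2,\cdot).
\]
This is a smooth 1-form: $H_i$ is the trace of $h_i$ divided by the rank, so it is smooth. The displayed conditions then hold, so the Weyl connection $D$ with Lee form $\theta$ preserves $T_1$ and $T_2$.

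\textbf{Main obstacle.} The only delicate point is verifying that preserving $T_1$ automatically gives preservation of $T_2$; that is why both umbilicity conditions appear. I will check this using that $D$ is conformal: $D_Xg=-2\theta(X)g$. Then for $Y\in T_1$ and $Z\in T_2$ one has $g(D_XY,Z)+g(Y,D_XZ)=0$, so $D$ preserves $T_1$ if and only if it preserves $T_1^\perp=T_2$. Thus it suffices to impose the condition along $T_1$ for all $X\in TM$. This splits into two cases:

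\begin{itemize}
\item For $X\in T_1$, the condition is the umbilicity computation of the first step.
\item For $X\in T_2$ and $Y\in T_1$, the condition is the $T_2$-component of $D_XY$, namely $(\nabla_XY)^{T_2}+\theta(Y)X$. Pairing with $Z\in T_2$ gives $-g(Y,\nabla_XZ)+\theta(Y)g(X,Z)$. Symmetrizing in $X,Z$ and using integrability of $T_2$, this equals $-g(Y,h_2(X,Z))+\theta(Y)g(X,Z)$, so it is equivalent to umbilicity of $\mathcal F_2$ with $H_2=\theta_1^\sharp$.
\end{itemize}

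Together these give the equivalence.
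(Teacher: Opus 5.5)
Your proposal is correct and follows the same route as the paper: write $D$ via its Lee form $\theta$, compute $g(D_XY,Z)=g(h(X,Y)-g(X,Y)\theta^\sharp,Z)$ for $X,Y\in T_1$, $Z\in T_2$, and read off umbilicity with $H_1=\theta_2^\sharp$ (and symmetrically $H_2=\theta_1^\sharp$). You also make explicit two steps that the paper compresses into its final ``Consequently'': the mixed directions $X\in T_2$, $Y\in T_1$, handled via $g(D_XY,Z)+g(Y,D_XZ)=0$, and the explicit choice $\theta=g(H_1+H_2,\cdot)$ for sufficiency.
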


\begin{proof}
    For $X,Y\in \Gamma(T_1)$ and $Z\in \Gamma(T_2)$, we have
\begin{equation}\label{1}
\begin{aligned}
g(D_XY,Z)&=g(\nabla_XY+\theta(X)Y+\theta(Y)X-g(X,Y)\theta^\sharp,Z)\\
&=g(\nabla_XY,Z)-g(X,Y)\theta(Z)\\
&=g(h(X,Y)-g(X,Y)\theta^\sharp,Z)
\end{aligned}
\end{equation}

The Weyl connection $D$ preserves $T_1$ if and only if $g(D_XY,Z)=0$ for all $X,Y\in \Gamma(T_1)$ and $Z\in \Gamma(T_2)$, which by \eqref{1} is equivalent to $h(X,Y)=g(X,Y)\theta_2^\sharp$ i.e. to the fact that the foliation $\mathcal{F}_1$ is totally umbilical and the fundamental form is parallel with the projection of $\theta$ on $T_2$. Consequently, the Weyl connection $D$ preserves both $T_1$ and $T_2$ if and only if $\mathcal{F}_1$ and $\mathcal{F}_2$ are both totally umbilical.
\end{proof}

\section{Smoothness of Reeb foliations}

In order to establish the smoothness of $\mathcal{F}_f$, we need to recall a series of classical lemmas: 
\begin{lemma}\label{hadamard}
(Hadamard's lemma) If a function $H(x)\in\mathcal{C}^\infty(U)$ is defined on a convex neighborhood $U$ of $\ 0\in\mathbb{R}$, and $H(0)=0$, then there exists a smooth function $K(x)\in\mathcal{C}^\infty(U)$ such that $H(x)=x\cdot K(x)$.
\end{lemma}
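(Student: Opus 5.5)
The plan is the standard integral-representation argument. Since $U$ is a convex neighbourhood of $0$ in $\mathbb{R}$ (an interval containing $0$), for every $x\in U$ the whole segment $\{sx : s\in[0,1]\}$ lies in $U$. I would apply the fundamental theorem of calculus to the function $s\mapsto H(sx)$ on $[0,1]$, using $H(0)=0$:
\[
H(x)=H(x)-H(0)=\int_0^1 \frac{d}{ds}H(sx)\,ds=x\int_0^1 H'(sx)\,ds .
\]
This suggests defining
\[
K(x):=\int_0^1 H'(sx)\,ds, \qquad x\in U,
\]
which is well defined because $sx\in U$ for all $s\in[0,1]$. The identity above then gives $H(x)=x\cdot K(x)$ directly. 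At $x=0$ both sides vanish, and $K(0)=H'(0)$.

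What remains is to check that $K\in\mathcal{C}^\infty(U)$, and this is the only step needing care. The integrand $(s,x)\mapsto H'(sx)$ is smooth on a neighbourhood of $[0,1]\times U$. All its $x$-derivatives, $s^k H^{(k+1)}(sx)$, are continuous in $(s,x)$. Hence, on every compact subinterval of $U$, they are bounded uniformly in $s\in[0,1]$. The standard theorem on differentiation under the integral sign over the compact parameter interval $[0,1]$ then yields, by induction on $k$,
\[
K^{(k)}(x)=\int_0^1 s^k H^{(k+1)}(sx)\,ds,
\]
and each of these is continuous. So $K$ is smooth on $U$.

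I do not expect a real obstacle. The only points to watch are that convexity is exactly what keeps $sx$ inside the domain of $H$, and that the uniform bounds are taken on compact subsets, since $U$ may be open and unbounded.
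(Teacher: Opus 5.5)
Your proof is correct and is the same approach as the paper, which only records the construction $K(x)=\int_0^1 H'(tx)\,\mathrm{d}t$ and notes that $K(0)=H'(0)$. You also supply the details the paper leaves implicit: the fundamental theorem of calculus along the segment $\{sx : s\in[0,1]\}$ (which convexity keeps inside $U$) and differentiation under the integral sign to get smoothness of $K$.
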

\textbf{Remark:} The construction of $K$ is given by:
\[
K(x)=\int_0^1H'(tx)\text{d}t
\]
so $K(0)=\int_0^1H'(0)\text{d}t=H'(0)$.

We also need the following lemma:
\begin{lemma}\label{simple}
    If a smooth function $F:(-1,1)\rightarrow \mathbb{R}$ satisfies $F^{(n)}(0)=0$ for any $n\in\mathbb{N}$, then there exists a smooth function $G:(-1,1)\rightarrow\mathbb{R}$ such that  $F(r)=G(r^2)$, $\forall r\in[0,1)$. 
\end{lemma}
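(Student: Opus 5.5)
The plan is to define $G$ explicitly and check smoothness at the only delicate point, $s=0$. Set
\[
G(s)=F(\sqrt{s})\quad\text{for } 0\le s<1,\qquad G(s)=0\quad\text{for } -1<s<0 .
\]
Then $G(r^2)=F(r)$ for $r\in[0,1)$ by construction. On $(0,1)$, $G$ is a composition of smooth maps, hence smooth. On $(-1,0)$ it is identically zero. So everything reduces to showing that $G$ is $C^\infty$ at $s=0$ with $G^{(k)}(0)=0$ for all $k$.

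\textbf{Step 1 (flatness estimates).} Since all derivatives of $F$ vanish at $0$, Taylor's formula with remainder applied to $F^{(k)}$ gives, for every $k,N\in\mathbb{N}$, a constant $C_{k,N}$ with
\[
|F^{(k)}(r)|\le C_{k,N}\, r^{N}\quad\text{for } 0\le r\le \tfrac12 .
\]

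\textbf{Step 2 (derivatives on $s>0$).} By induction on $k$ (chain rule, or Fa\`a di Bruno), for $s>0$ the derivative $G^{(k)}(s)$ is a finite linear combination of terms of the form
\[
F^{(j)}(\sqrt{s})\, s^{\,j/2-k},\qquad 1\le j\le k .
\]
The exponents here are half-integers, possibly negative, but bounded below by $-k$. Combining this with Step 1 for $N$ large (say $N=2k+1$), each term is $O(s^{1/2})$. Hence $G^{(k)}(s)\to 0$ as $s\to0^+$ for every $k$. On the left side, $G^{(k)}\equiv 0$.

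\textbf{Step 3 (smoothness at $0$).} By induction on $k$, I show that $G^{(k)}$ exists at $0$, equals $0$, and is continuous there. Suppose this holds for $k$. Then $G^{(k)}$ is continuous on a neighbourhood of $0$ and differentiable away from $0$, with $G^{(k+1)}(s)\to0$ from both sides. The mean value theorem gives
\[
\frac{G^{(k)}(s)-G^{(k)}(0)}{s}=G^{(k+1)}(\xi_s)\longrightarrow 0 .
\]
So $G^{(k+1)}(0)=0$, and $G^{(k+1)}$ is continuous at $0$. This completes the induction, and $G\in C^\infty(-1,1)$.

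The main obstacle is Step 2: keeping track of the negative powers of $s$ produced by differentiating $\sqrt{s}$. It only requires a crude bound on the structure of the terms, namely at most $k$ derivatives of $F$ and powers of $s$ no worse than $s^{-k}$. That crude bound is enough, because the flatness of $F$ from Step 1 absorbs any finite negative power.
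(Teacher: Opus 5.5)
Your proof is correct and takes essentially the same route as the paper. You use the same extension $G(s)=F(\sqrt{s})$ for $s\ge 0$ and $G=0$ for $s<0$, the same inductive description of $G^{(k)}$ on $s>0$ as a combination of terms $F^{(j)}(\sqrt{s})\,s^{j/2-k}$ with $1\le j\le k$, and the flatness of $F$ to absorb the negative powers of $s$. Your Step 3 (the mean value theorem induction) just makes explicit what the paper leaves implicit when it reduces smoothness of $G$ at $0$ to $\lim_{y\to 0^+}G^{(k)}(y)=0$.
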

\begin{proof}
We define 
    \[
    G(y):=\begin{cases}
        F(\sqrt y),\ \ y\ge 0\\
        0,\qquad\ \  y<0.
    \end{cases}
    \]
 hence $F(r)=G(r^2),\ \forall r\in[0,1)$. We need to prove that $G$ is smooth at $0$ i.e., that $\lim_{y\rightarrow0^+}G^{(k)}(y)=0$.

Since $F^{(n)}(0)=0, \forall n\in\mathbb{N}$, we have $\lim_{x\rightarrow0^+}\frac{F^{(n)}(x)}{x^m}=0, \forall n,m \in\mathbb{N}$. It suffices to prove that $G^{(k)}$ is a finite linear combination (with coefficients in $\mathbb{R}$) of terms of the form: $\frac{F^{(j)}(\sqrt y)}{(\sqrt y)^{2k-j}}$ for $1\le j\le k$. This follows immediately by induction.
\end{proof}

\begin{lemma}\label{whitney}
    For a smooth function $F:(-1,1)\rightarrow\mathbb{R}$, the following are equivalent:\\
    i) $F^{(2k+1)}(0)=0,\forall  k\in\mathbb{N}$,\\
    ii) there exists a smooth function $G:(-1,1)\rightarrow\mathbb{R}$ such that  $F(x)=G(x^2)$, $\forall x\in[0,1)$.
\end{lemma}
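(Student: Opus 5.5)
The direction ii) $\Rightarrow$ i) is a direct computation, so I treat it first. Suppose $F(x)=G(x^2)$ for $x\in[0,1)$. The function $x\mapsto G(x^2)$ is smooth on $(-1,1)$ and even, so all its odd-order derivatives vanish at $0$. On $[0,1)$ it agrees with $F$, so the one-sided derivatives at $0$ of both functions coincide. Both are smooth, so these one-sided derivatives are the true derivatives. Hence $F^{(2k+1)}(0)=\frac{d^{2k+1}}{dx^{2k+1}}G(x^2)\big|_{0}=0$ for all $k$.

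For i) $\Rightarrow$ ii), I reduce to Lemma \ref{simple} by subtracting an even function that matches the full Taylor series of $F$ at $0$.

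1. By Borel's lemma, choose a smooth function $P:(-1,1)\to\mathbb{R}$ with $P^{(n)}(0)=\frac{F^{(2n)}(0)}{(2n)!}\,n!$ for all $n$. This makes the formal series of $P$ equal to $\sum_n \frac{F^{(2n)}(0)}{(2n)!}y^n$.

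2. Set $E(x):=P(x^2)$. Then $E$ is smooth and even, so its odd derivatives at $0$ vanish. Its Taylor series at $0$ is obtained by substituting $y=x^2$ into that of $P$; this is legitimate because the Taylor map is a ring homomorphism compatible with composition. The result is $\sum_n \frac{F^{(2n)}(0)}{(2n)!}x^{2n}$. By hypothesis i), this is exactly the Taylor series of $F$. Hence $(F-E)^{(n)}(0)=0$ for every $n$.

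3. Apply Lemma \ref{simple} to $F-E$. It gives a smooth $\tilde G$ with $F(x)-E(x)=\tilde G(x^2)$ for $x\in[0,1)$.

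4. Put $G:=P+\tilde G$. This $G$ is smooth on $(-1,1)$ and satisfies $F(x)=G(x^2)$ on $[0,1)$.

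The only nontrivial input is Borel's lemma, which is classical. I would cite it, or sketch its standard proof: take $\sum_n a_n\chi(y/\varepsilon_n)y^n$ with a cutoff $\chi$ and $\varepsilon_n\to0$ fast enough for $C^k$ convergence.

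An alternative would be Whitney's theorem on even functions: set $G(y)=F(\sqrt y)$ for $y\ge0$ and extend it by a Seeley extension. That route needs a similar amount of work to control $G$ near $0^+$, so the Borel reduction above is cleaner. The main point needing care is step 2, namely checking that the Taylor coefficients of $P(x^2)$ are exactly the even Taylor coefficients of $F$.
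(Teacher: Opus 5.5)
Your proposal is correct and follows the paper's proof essentially step for step. You use Borel's lemma to build the even function $x\mapsto L(x^2)$ matching the Taylor series of $F$, apply Lemma \ref{simple} to the flat remainder, take $G=L+\tilde G$, and handle the converse by the same parity argument; your step 2 also supplies the justification, left implicit in the paper, that the remainder is flat at $0$.
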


\begin{proof}
     We assume $F^{(2k+1)}(0)=0,\forall  k\in\mathbb{N}$. By Borel's lemma (c.f. Theorem 3.3 of \cite{lafontaine}), there is a smooth function $L\in\mathcal{C}^\infty(\mathbb{R})$ that satisfies:
    \[
    L^{(k)}(0)=\frac{F^{(2k)}(0)}{(2k)!}k!.
    \]
    We define $\tilde{F}(x)=L(x^2)$, which is a smooth even function. Hence $R(x):=F(x)-\tilde{F}(x)$  satisfies $R^{(k)}(0)=0$, $\forall k\in\mathbb{N}$. 
    Then by Lemma \ref{simple}, there exists a smooth function $\tilde{G}:(-1,1)\rightarrow \mathbb{R}$ such that $R(x)=\tilde{G}(x^2)$ for every $x\in[0,1)$. Then $G(y):=L(y)+\tilde{G}(y)$ is smooth and satisfies $G(x^2)=L(x^2)+\tilde{G}(x^2)=\tilde{F}(x)+R(x)=F(x)$.

   The converse is straightforward: if ii) holds, $F(x)$ coincides with $G(x^2)$ on $[0,1)$. Since $G(x^2)$ is a smooth even function on $(-1, 1)$, all of its odd derivatives at $0$ vanish. Because $F$ is smooth on $(-1, 1)$, its derivatives at $0$ equal its right derivatives at $0$, which match those of $G(x^2)$. Thus $F^{(2k+1)}(0) = 0$ for all $k \in \mathbb{N}$.
\end{proof}

\begin{lemma}\label{lemma again}
    Let $F:D^2\rightarrow\mathbb{R}$ be defined by $F(x,y)=f(r)$ for a function $f:[0,1)\rightarrow\mathbb{R}$, where $r=\sqrt{x^2+y^2}$, i.e., $F(x,y)$ is determined by the distance from $(x,y)$ to $(0,0)$. Then $F$ is smooth at $(0,0)$ if and only if there exists a function $h\in\mathcal{C}^\infty((-1,1))$ such that $f(r)=h(r^2),\ \forall r\in[0,1)$. 
\end{lemma}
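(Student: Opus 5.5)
The plan is to reduce both directions to the one-variable Lemma \ref{whitney} by restricting $F$ to the $x$-axis, and to use an explicit composition for the converse.

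\emph{Sufficiency.} Suppose $f(r)=h(r^2)$ for $r\in[0,1)$ with $h\in\mathcal{C}^\infty((-1,1))$. Then $F(x,y)=h(x^2+y^2)$ on $D^2$. This is the composition of the polynomial $(x,y)\mapsto x^2+y^2$, which maps $D^2$ into $[0,1)\subset(-1,1)$, with the smooth function $h$. Hence $F$ is smooth on $D^2$, and in particular at $(0,0)$.

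\emph{Necessity.} Suppose $F$ is smooth at $(0,0)$. Being smooth at $(0,0)$ means smooth on a neighbourhood of the origin; away from the origin we regard $F$ as smooth on $D^2$, as the lemma tacitly intends. Set $\tilde f(x):=F(x,0)$ for $x\in(-1,1)$. This is a smooth function of one variable and equals $f(|x|)$. Therefore $\tilde f$ is even, since $\tilde f(-x)=F(-x,0)=f(|x|)=\tilde f(x)$.

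Differentiating the identity $\tilde f(-x)=\tilde f(x)$ $n$ times gives $(-1)^n\tilde f^{(n)}(-x)=\tilde f^{(n)}(x)$. Evaluating at $x=0$ shows that $\tilde f^{(2k+1)}(0)=0$ for all $k\in\mathbb{N}$. Lemma \ref{whitney}, applied to $\tilde f$, then yields a smooth $G:(-1,1)\to\mathbb{R}$ with $\tilde f(x)=G(x^2)$ for $x\in[0,1)$. Since $\tilde f(r)=f(r)$ for $r\in[0,1)$, the choice $h:=G$ works.

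\emph{Main obstacle.} The only delicate point is the smoothness domain in the necessity direction. If $F$ is only assumed smooth near $0$, the argument gives $h$ smooth near $0$. The statement asks for $h$ on all of $(-1,1)$, so we take $F\in\mathcal{C}^\infty(D^2)$, making $\tilde f$ smooth on $(-1,1)$. The real analytic content, removing the square-root singularity, is already contained in Lemma \ref{whitney}, via Borel's lemma and Lemma \ref{simple}.
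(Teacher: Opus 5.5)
Your proposal is correct and follows the paper's proof: restrict $F$ to the $x$-axis to get the smooth even function $\tilde f(x)=F(x,0)=f(|x|)$, apply Lemma \ref{whitney}, and obtain the converse by composing $h$ with $x^2+y^2$. Your derivation that evenness forces $\tilde f^{(2k+1)}(0)=0$, and your remark that $F$ must be smooth on the whole open disc to get $h$ on all of $(-1,1)$, spell out steps the paper leaves implicit.
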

\begin{proof}
    We consider $D^2$ as the unit disc of $\mathbb{R}^2$, i.e., $D^2=\{(x,y)\in\mathbb{R}^2\ |\ x^2+y^2\le1\}$. The function $\tilde{f}:(-1,1)\rightarrow\mathbb{R}$ defined by:
    \[
    \tilde{f}(x):=\begin{cases}
        f(x),\quad x\ge 0\\
        f(-x),\ x<0
    \end{cases}
    \]
    is even and smooth, since $\tilde{f}(x)=F(x,0),\forall x\in(-1,1)$. Hence by Lemma \ref{whitney}, there exists a function $h\in\mathcal{C}^\infty((-1,1))$ such that $f(r)=h(r^2)$. 

    Conversely, if $f(r)=h(r^2)$ for $r\in [0,1)$, then $F(x,y)=f(r)=h(r^2)=h(x^2+y^2)$ which is the composition of smooth functions $r^2(x,y)=x^2+y^2$ and $h$, so $F$ is smooth.
\end{proof}

From now on we assume that $f:[0,1)\rightarrow\mathbb{R}$ defined in \eqref{H} is a smooth function such that $f'(r)>0$ for $0<r<1$ and $\lim_{r\rightarrow 1}f(r)=+\infty$.
\begin{proposition}\label{condition1}
    The foliation $\mathcal{F}_f$ on $\mathring{D}^2\times S^1$ is smooth at $C_1$ and $C_2$ if $f$ satisfies: $f^{(2k+1)}(0)=0, \forall k\in\mathbb{N}$.
\end{proposition}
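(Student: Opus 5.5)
The plan is to exhibit, near the core circle, a smooth submersion whose level sets are the leaves of $\mathcal{F}_f$. Away from the core circle $C=\{0\}\times S^1$, the foliation is already smooth: it is the projection of the level sets of $F(r,\phi,t)=t-f(r)$, and $dF=dt-f'(r)\,dr$ never vanishes because $dt\neq 0$. So the only issue is at $r=0$, where the polar coordinate $r$ is not a smooth function on $\mathring{D}^2$. Through $\Phi_1,\Phi_2$ this core circle corresponds to $C_1$ and $C_2$.

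First, by the hypothesis $f^{(2k+1)}(0)=0$ for all $k$, I apply Lemma~\ref{whitney} to a smooth extension of $f$ to $(-1,1)$. We may take this extension to be the even reflection, or any smooth extension, since only the Taylor data at $0$ matter. This gives $h\in\mathcal{C}^\infty((-1,1))$ with $f(r)=h(r^2)$ on $[0,1)$. Lemma~\ref{lemma again} then shows that $(x,y)\mapsto f(\sqrt{x^2+y^2})=h(x^2+y^2)$ is smooth on $\mathring{D}^2$, including at the origin. Consequently $\tilde F(x,y,t)=t-h(x^2+y^2)$ is a smooth function on $\mathring{D}^2\times\mathbb{R}$ in Cartesian coordinates, and it agrees with $F$ off the axis.

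Second, I check that $\tilde F$ is a submersion everywhere: its $t$-component gives $d\tilde F(\partial_t)=1$, so $d\tilde F$ never vanishes. Hence its level sets form a smooth codimension-one foliation of $\mathring{D}^2\times\mathbb{R}$. Since $\tilde F(x,y,t+2\pi)=\tilde F(x,y,t)+2\pi$, the foliation is invariant under deck translations and descends through $\pi$ to a smooth foliation of $\mathring{D}^2\times S^1$. Locally, the $S^1$-valued map $e^{i\tilde F}=\gamma\, e^{-ih(x^2+y^2)}$ is a smooth submersion whose fibres are the leaves. This descended foliation coincides with $\mathcal{F}_f$ away from $C$, and it extends $\mathcal{F}_f$ smoothly across $C$ (the leaf meeting $C$ is the disc $t=\text{const}+h(x^2+y^2)$, transverse to $C$).

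Finally, I transport this through $\Psi_1,\Psi_2$. The Cartesian coordinates $(x,y)$ correspond to $\sqrt2\,z_2$ on $\mathring V_1$ and to $\sqrt2\,z_1$ on $\mathring V_2$, and $\gamma$ corresponds to $z_1/|z_1|$ and $z_2/|z_2|$ respectively. All of these are smooth near $C_1$ and $C_2$, since $|z_1|$ and $|z_2|$ are nonzero there. So the foliation is smooth at $C_1$ and $C_2$. The main subtlety is the first step: justifying $f=h(r^2)$ requires a smooth function on $(-1,1)$ to which Lemma~\ref{whitney} applies. I would handle this with Borel's lemma, or by reflecting $f$, as in the proof of Lemma~\ref{lemma again}.
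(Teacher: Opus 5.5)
Your proposal is correct and follows the paper's route. Like the paper's proof, it reduces smoothness at the core circles to smoothness of $f(\sqrt{x^2+y^2})$ at the origin, obtained from Lemma~\ref{whitney} and Lemma~\ref{lemma again}; your submersion $t-h(x^2+y^2)$, its descent to $\mathring{D}^2\times S^1$, and the even extension of $f$ to $(-1,1)$ spell out steps the paper treats as immediate.
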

\begin{proof}
     By definition, the Reeb foliation is smooth at $C_1$ and $C_2$ if the function $F(x,y):= f(\sqrt{x^2+y^2})$ is smooth at $(0,0)\in\mathbb{R}^2$. By Lemma \ref{whitney} and Lemma \ref{lemma again}, this is equivalent to $f^{(2k+1)}(0)=0,\forall k\in\mathbb{N}$.
\end{proof}

We now set up a chart construction of a neighborhood of the compact leaf $T^2$.

For any $\epsilon\in(0,1)$, we can define a cube in $\mathbb{R}^3$ and its corresponding open set, a neighborhood of $T^2$ denoted by $O$:
\begin{equation}\label{V,O}
\begin{aligned}
&W:=\{(\rho,\alpha,\beta)\in\mathbb{R}^3\ |\ \rho\in(-\epsilon,\epsilon),\alpha\in(-\pi,\pi),\beta\in(-\pi,\pi)\}.\\
&O:=\{(z_1,z_2)\in S^3\ |\ -\epsilon<|z_1|^2-|z_2|^2<\epsilon,z_1\notin\mathbb{R}_{\le 0},z_2\notin\mathbb{R}_{\le 0}\}.
\end{aligned}
\end{equation}
For $(z_1,z_2)\in O$, since $|z_1|^2+|z_2|^2=1$ and $-\epsilon<|z_1|^2-|z_2|^2<\epsilon$, we have:
\[
|z_1|^2>\frac{1-\epsilon}{2}>0,\quad |z_2|^2>\frac{1-\epsilon}{2}>0.
\]
We will now construct a diffeomorphism between $W$ and $O$:
We define a map $\Psi:O\rightarrow W$ as 
\[
\Psi(x_1,x_2,x_3,x_4)=(\rho,\alpha,\beta)
\]
    where $(\rho,\alpha,\beta)$ can be expressed respectively as: 
    \begin{equation}
        \begin{aligned}
          \rho&=|z_1|^2-|z_2|^2=x_1^2+x_2^2-x_3^2-x_4^2\\
        \alpha&=\text{Arg}(z_1)=\text{Arg}(x_1+ix_2)\\
        \beta&=\text{Arg}(z_2)=\text{Arg}(x_3+ix_4),
        \end{aligned}
    \end{equation}
    where $\alpha, \beta$ are chosen to be contained in $(-\pi,\pi)$. These functions are smooth by definition of $O$, hence $\Psi:O\rightarrow W$ is smooth.
    We construct its inverse $\Phi:W\rightarrow O$: $\Phi(\rho,\alpha,\beta)=(x_1,x_2,x_3,x_4)$ where
    \begin{equation}
        \begin{aligned}
            &x_1(\rho,\alpha,\beta)=\sqrt{\frac{1+\rho}{2}}\cos\alpha,\quad x_2(\rho,\alpha,\beta)=\sqrt{\frac{1+\rho}{2}}\sin\alpha\\
            &x_3(\rho,\alpha,\beta)=\sqrt{\frac{1-\rho}{2}}\cos\beta,\quad x_4(\rho,\alpha,\beta)=\sqrt\frac{1-\rho}{2}\sin\beta.
        \end{aligned}
    \end{equation}
In other words,
    \[z_1=\sqrt{\frac{1+\rho}{2}}e^{i\alpha},\quad z_2=\sqrt{\frac{1-\rho}{2}}e^{i\beta}
    \]
It is simple to establish that they are inverse mappings of each other and hence diffeomorphisms. We denote by $\Phi^*$ the pullback of differential forms through $\Phi$, and by $\Phi_*$ the pushforward of vector fields.
\begin{figure}[ht]
\centering
\begin{tikzpicture}[
    >=Latex,
    thick
]


\draw[line width=1.2pt]
    (0,0) ellipse (2.0 and 1.25);

\draw[line width=1.1pt]
    (-0.75,0.15)
    .. controls (-0.45,-0.68)
             and (0.95,-0.68) ..
    (1.15,0.15);

\draw[line width=1.1pt]
    (-0.45,-0.18)
    .. controls (-0.05,0.52)
             and (0.62,0.52) ..
    (0.92,-0.18);

\node at (0,-1.85)
    {$D^2\times S^1$};

\draw[->]
    (1.9,0.8)
    .. controls (3.2,1.7) and (4.0,1.55) ..
    node[above] {$\Phi_1$}
    (5.0,1.0);

\draw[->]
    (1.9,-0.8)
    .. controls (3.2,-1.7) and (4.0,-1.55) ..
    node[below] {$\Phi_2$}
    (5.0,-1.0);


\draw[line width=1.2pt]
    (6.6,0) circle (2.0);

\draw[line width=1pt]
    (4.65,0.22)
    to[out=5,in=175]
    (8.55,0.02);

\draw[red,line width=1pt]
    (4.75,0.62)
    to[out=5,in=175]
    (8.45,0.45);

\draw[red,line width=1pt]
    (4.75,-0.28)
    to[out=5,in=175]
    (8.45,-0.45);

\draw[red,line width=1pt]
    (4.75,0.62)--(4.75,-0.28);

\draw[red,line width=1pt]
    (8.45,0.45)--(8.45,-0.45);

\node[red] at (5.2,0.95)
    {$U$};

\node at (6.6,1.35)
    {$V_1$};

\node at (6.6,-1.35)
    {$V_2$};

\node at (6.6,-2.7)
    {$S^3$};


\draw[->]
    (8.0,0.02)
    .. controls (9.4,0.72) and (10.5,0.68) ..
    node[above] {$\Psi$}
    (11.4,0.3);



\draw[line width=1.1pt]
    (10.6,-1.3)
    rectangle
    (12.8,1.3);

\node at (11.7,0.35)
    {$W$};

\draw[dashed]
    (10.3,0)
    -- (13.0,0);

\node[right] at (12.7,0.9)
    {$\rho>0$};

\node[right] at (12.7,0)
    {$\rho=0$};

\node[right] at (12.7,-0.9)
    {$\rho<0$};

\node at (11.7,-2.1)
    {$W\subset\mathbb R^3$};


\draw[->]
    (5.05,0.65)
    .. controls (4.1,1.15) and (3.0,1.05) ..
    node[below] {$\Psi_1$}
    (1.95,0.45);

\draw[->]
    (5.05,-0.65)
    .. controls (4.1,-1.15) and (3.0,-1.05) ..
    node[above] {$\Psi_2$}
    (1.95,-0.45);


\draw[->]
    (11.35,-0.15)
    .. controls (10.2,-0.85) and (9.0,-0.8) ..
    node[below] {$\Phi$}
    (8.05,-0.10);

\end{tikzpicture}
\caption{Reeb decomposition and foliated chart}
\end{figure}

The transition function between the charts $\Psi_1:\mathring{V}_1\rightarrow \mathring{D}^2\times S^1$ and $\Psi:O\rightarrow W$ is:
\begin{equation}\label{Psi_1 Phi}
\Psi_1\circ\Phi(\rho,\alpha,\beta)=\Psi_1(\sqrt{\frac{1+\rho}{2}}e^{i\alpha},\sqrt{\frac{1-\rho}{2}}e^{i\beta})=(\sqrt{1-\rho}e^{i\beta},e^{i\alpha})
\end{equation}
The transition function between the charts $\Psi_2:\mathring{V}_2\rightarrow\mathring{D}^2\times S^1$ and $\Psi: O\rightarrow W$ is:
\begin{equation}\label{Psi_2 Phi}
\Psi_2\circ\Phi(\rho,\alpha,\beta)=\Psi_2(\sqrt{\frac{1+\rho}{2}}e^{i\alpha},\sqrt{\frac{1-\rho}{2}}e^{i\beta})=(\sqrt{1+\rho}e^{i\alpha},e^{i\beta}).
\end{equation}
Choose an arbitrary point $P=(\rho,\alpha,\beta)\in W\subset\mathbb{R}^3$. By \eqref{coordinates}, we have:
\begin{itemize}
    \item  $\Phi(P)\in V_1$ if and only if $\rho\ge 0$. In this case, $r_1(\Phi(P))=r(\Psi_1\circ\Phi(P))=\sqrt{2}|z_2| = \sqrt{2} \sqrt{\frac{1-\rho}{2}}=\sqrt{1-\rho}$, $\phi_1(\Phi(P))=\phi\circ(\Psi_1\circ\Phi(P))=\text{Arg}(z_2)=\beta$, $\gamma_1(\Phi(P))=\gamma(\Psi_1\circ\Phi(P))=\frac{z_1}{|z_1|}=e^{i\alpha}$.
    \item  $\Phi(P)\in V_2$ if and only if $\rho\le 0$. In this case, $r_2(\Phi(P))=r\circ\Psi_2\circ\Phi(P)=\sqrt{2}|z_1|=\sqrt{2} \sqrt{\frac{1+\rho}{2}}=\sqrt{1+\rho}$, $\phi_2(\Phi(P))=\phi(\Psi_2\circ\Phi(P))=\text{Arg}(z_1)=\alpha$, $\gamma_2(\Phi(P))=\gamma(\Psi_2\circ\Phi(P))=\frac{z_2}{|z_2|}=e^{i\beta}$. 
\end{itemize}

\begin{proposition}\label{condition2}
    The Reeb foliation is smooth at $T^2$ if  $\lim_{r\rightarrow1^-}\frac{\text{d}^k}{\text{d}r^k}(\frac{1}{f'(r)})=0,\ \  \forall k\in\mathbb{N}$.
\end{proposition}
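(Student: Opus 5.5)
Work in the chart $\Psi:O\to W$ from \eqref{V,O} and describe the Reeb foliation there by a smooth nowhere-vanishing 1-form (equivalently a line field of normals) that is defined across $\rho=0$. On $\rho>0$ we are in $\mathring V_1$ with $r_1=\sqrt{1-\rho}$, $\phi_1=\beta$ and $t_1=\alpha$. The leaves of $\mathcal F_f$ there are level sets of $t-f(r)$, so they are annihilated by
\[
\omega_1=\mathrm{d}\alpha - f'(\sqrt{1-\rho})\,\mathrm{d}(\sqrt{1-\rho}) = \mathrm{d}\alpha+\frac{f'(\sqrt{1-\rho})}{2\sqrt{1-\rho}}\,\mathrm{d}\rho .
\]
Since $f'>0$, the rescaled form
\[
\tilde\omega_1:=\frac{2\sqrt{1-\rho}}{f'(\sqrt{1-\rho})}\,\mathrm{d}\alpha+\mathrm{d}\rho
\]
defines the same distribution on $\rho>0$. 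Similarly, on $\rho<0$ we have $r_2=\sqrt{1+\rho}$ and $t_2=\beta$, and the tangent distribution is the kernel of
\[
\tilde\omega_2:=-\frac{2\sqrt{1+\rho}}{f'(\sqrt{1+\rho})}\,\mathrm{d}\beta+\mathrm{d}\rho .
\]
On $\rho=0$ the torus $T^2$ is the leaf with kernel of $\mathrm{d}\rho$. So define on $W$ the form $\omega=\mathrm{d}\rho+a(\rho)\,\mathrm{d}\alpha+b(\rho)\,\mathrm{d}\beta$, where $a(\rho)=2\sqrt{1-\rho}/f'(\sqrt{1-\rho})$ for $\rho>0$ and $0$ otherwise, and $b(\rho)=-2\sqrt{1+\rho}/f'(\sqrt{1+\rho})$ for $\rho<0$ and $0$ otherwise. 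The form $\omega$ never vanishes because its $\mathrm{d}\rho$ coefficient is $1$, and its kernel is integrable, being the tangent distribution of the leaves on each side and of $T^2$ at $\rho=0$. It therefore suffices to show that $a$ and $b$ are $C^\infty$ on $(-\epsilon,\epsilon)$. (Depending on the Dehn-twist gluing convention the sign in $b$ may differ, but this does not affect smoothness.)

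The key step is smoothness of $a$ at $\rho=0$; $b$ is handled symmetrically. Put $u(r)=1/f'(r)$ on $(0,1)$. Then $a(\rho)=2\sqrt{1-\rho}\,u(\sqrt{1-\rho})$ for $\rho\in(0,\epsilon)$. The map $\rho\mapsto\sqrt{1-\rho}$ is smooth near $\rho=0$ and sends $0^+$ to $1^-$. By the chain rule (Faà di Bruno), every derivative $\frac{\mathrm{d}^k}{\mathrm{d}\rho^k}a(\rho)$ is a finite sum of products of smooth bounded functions of $\rho$ with $u^{(j)}(\sqrt{1-\rho})$, $0\le j\le k$. The hypothesis $\lim_{r\to1^-}u^{(j)}(r)=0$ for all $j$ therefore gives $\lim_{\rho\to0^+}a^{(k)}(\rho)=0$ for every $k$. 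Since $a\equiv0$ for $\rho\le0$, a standard argument shows that $a$ is $C^\infty$ at $0$ with all derivatives zero: inductively, $a^{(k)}$ is continuous at $0$ and the one-sided difference quotients converge to $0$ by the mean value theorem. The same argument with $\sqrt{1+\rho}$ and $\rho\to0^-$ gives smoothness of $b$.

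The remaining check is that this local description covers all of $T^2$. The chart $O$ omits the circles where $z_1$ or $z_2$ lies on the negative real axis. We can repeat the construction with the branch cuts of $\operatorname{Arg}$ rotated: the forms $\mathrm{d}\alpha$ and $\mathrm{d}\beta$ are globally defined near $T^2$, so $\omega$ is in fact a globally defined smooth 1-form on a neighbourhood of $T^2$. I expect the main obstacle to be the second step, specifically the bookkeeping in the chain rule showing that only derivatives of $1/f'$ occur, and that the factor $\sqrt{1-\rho}$ and the derivatives of the inner function stay bounded near $\rho=0$ so they do not spoil the limits. The construction of $\omega$ is routine once one rescales by $1/f'$, which is exactly what turns the hypothesis into flatness of $a$ and $b$ at $\rho=0$.
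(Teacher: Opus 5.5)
Your proof is correct. It is essentially the paper's argument in dual form: the paper pushes the tangent frame $F_1=\partial_\phi$, $F_2=\frac{1}{f'}\partial_r+\partial_t$ forward to $W$ and gets $\partial_\rho$-coefficients that are, up to sign, exactly your $a$ and $b$, namely $2\sqrt{1\mp\rho}/f'(\sqrt{1\mp\rho})$. The whole content in both versions is the one-sided flatness of these functions at $\rho=0$, and you encode it through an annihilating 1-form rather than a spanning frame. Your version also spells out two points the paper only asserts: why a function that is flat from one side and zero on the other is $C^\infty$, and why the branch cuts are harmless (because $\mathrm{d}\alpha$ and $\mathrm{d}\beta$ are global near $T^2$, where the paper appeals to toral symmetry).
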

\begin{proof}


 We set the moving frame of $\text{T}\mathcal{F}_f$ on $\mathring{D}^2\times S^1$ (which is diffeomorphic to $\mathring{V_1}$ and $\mathring{V_2}$ by $\Phi_1$, $\Phi_2$):
\begin{equation}\label{F_i}
    F_1=\partial_\phi,\quad
    F_2=\frac{1}{f'(r)}\partial_r+\partial_t,\quad
\end{equation}
Then we define the moving frame of $\text{T}\tilde{\mathcal{F}_f}$ on $S^3\setminus(C_1\cup C_2\cup T^2)$:
\begin{equation}
    \tilde{F}_1(z):=\begin{cases}
        \Phi_{1*}(F_1), z\in\mathring{V_1}\\
        \Phi_{2*}(F_2), z\in\mathring{V_2}
    \end{cases},\quad
    \tilde{F}_2:=\begin{cases}
        \Phi_{1*}(F_2), z\in\mathring{V_1}\\
        \Phi_{2*}(F_1),z\in\mathring{V_2}
    \end{cases}\ .\quad
\end{equation}
In order to prove the smoothness of $\mathcal{F}_f$, we only need to establish the smoothness of $\tilde{F}_1$ and $\tilde{F}_2$ at $T^2$. They are smooth if and only if their pull back to $W$ through $\Phi$ is smooth:
\begin{equation*}
    \Psi_*(\tilde{F}_1):=\begin{cases}
        (\Psi\circ\Phi_1)_*(F_1)=\partial_\beta, \rho>0\\
        (\Psi\circ\Phi_2)_*(F_2)=\frac{2\sqrt{1+\rho}}{f'(\sqrt{1+\rho})}\partial_\rho+\partial_\beta, \rho<0
    \end{cases}
\end{equation*}
\begin{equation*}
    \Psi_*(\tilde{F}_2):=\begin{cases}
        (\Psi\circ\Phi_1)_*(F_2)=-\frac{2\sqrt{1-\rho}}{f'(\sqrt{1-\rho})}\partial_\rho+\partial_\alpha,\rho>0,\\
        (\Psi\circ\Phi_2)_*(F_1)=\partial_\alpha,\rho<0.
    \end{cases}\quad
\end{equation*}
Since $f'(r) \to +\infty$ as $r \to 1^-$, the frame continuously extends to $\tilde{F}_1 = \partial_\beta, \tilde{F}_2=\partial_\alpha$, which  span the tangent bundle of $T^2$.
The compact leaf $T^2$ corresponds to the $\rho=0$ slice in $W\subset\mathbb{R}^3$. If the condition $\lim_{r\rightarrow1^-}\frac{\text{d}^k}{\text{d}r^k}(\frac{1}{f'(r)})=0$ for any $k\in\mathbb{N}$ holds, by a simple application to Leibniz rule, we obtain: $\lim_{\rho\rightarrow 0^\mp}\pm\frac{\text{d}^k}{\text{d}\rho^k}\frac{2\sqrt{1\pm\rho}}{f'(\sqrt{1\pm\rho})}=0, \forall k\in\mathbb{N}$. Hence the vector fields $\tilde{F}_1$ and $\tilde{F}_2$ are smooth at $T^2$.
\end{proof}
\begin{remark}
    Notice that the foliated neighborhood $O$ (and its coordinate cube $W$) does not cover the entire compact torus leaf $T^2$ due to the branch cuts of the argument functions ($z_1, z_2 \notin \mathbb{R}_{\le 0}$). However, verifying the smoothness and the totally umbilical (or geodesic) conditions strictly within the chart $W$ is mathematically sufficient to establish the global results on the entire $T^2$. This is guaranteed by the inherent toral symmetry of our construction: both the Reeb foliation $\tilde{\mathcal{F}_f}$ and the constructed Riemannian metric $\tilde{g}$ are point-wise invariant under the standard diagonal toral action $T^2 = S^1 \times S^1$ acting by phase rotations $(z_1, z_2) \mapsto (e^{i\psi_1}z_1, e^{i\psi_2}z_2)$. Since this action consists of global isometries and acts transitively on the boundary torus $T^2$, any point lying on the boundary cuts can be smoothly rotated into the interior of the chart $O$.
\end{remark}

\begin{assumption}\label{assumption1}
    We assume from now on that $f:[0,1)\rightarrow\mathbb{R}$ is a smooth function satisfying Assumption \ref{assumption0}, such that:\\
    iii) $f^{(2k+1)}(0)=0,\forall k\in\mathbb{N}$.\\
    iv) $\lim_{r\rightarrow1^-}\frac{\text{d}^k}{\text{d}r^k}(\frac{1}{f'(r)})=0,\ \  \forall k\in\mathbb{N}$.
\end{assumption}
\begin{remark}
    Conditions iii), iv) ensure of the smoothness of the foliation at $C_1$, $C_2$ and $T^2$ by Proposition \ref{condition1} and Proposition \ref{condition2} respectively.
\end{remark}
For example, we can construct such a function $f$ as:
\begin{equation}\label{f}
f(r)=\int_0^rt\text{exp}(\frac{t^2}{1-t^2})\text{d}t
\end{equation}
which satisfies all the conditions in Assumption \ref{assumption1}. 

\section{Construction of conformal product structure on $\mathring{D}^2\times S^1$}

The main goal of this section is to construct a conformal product structure on the solid torus $\mathring{D}^2\times S^1$. We consider a smooth function $f\in\mathcal{C}^\infty((0,1))$ and set the moving frame $\{E_i\}$ on $(\mathring{D}^2\setminus\{0\})\times S^1$ as follows:
\begin{equation}\label{E_i}
E_1=\partial_\phi,\quad E_2=\partial_r+f'(r)\partial_t,\quad E_3=-f'(r)\partial_r+\partial_t.
\end{equation}
Their dual basis $\{E_i^*\}$ can be expressed as follows:
\begin{equation}
\begin{aligned}
E_1^*&=\text{d}\phi,\\ E_2^*&=\frac{1}{1+f'(r)^2}\text{d}r+\frac{f'(r)}{1+f'(r)^2}\text{d}t,\\ 
E_3^*&=\frac{-f'(r)}{1+f'(r)^2}\text{d}r+\frac{1}{1+f'(r)^2}\text{d}t,
\end{aligned}
\end{equation}
which are well defined on $U:=(\mathring{D}^2\setminus\{0\})\times S^1$ and smooth. For any smooth positive functions $A(r),B(r),C(r)$ on $(0,1)$, we can define a metric $g_{f,A,B,C}$ on $U=(\mathring{D}^2\setminus\{0\})\times S^1$ by: 
\begin{equation}
\begin{aligned}
g(E_1,E_1)&=A(r)^2,\ \ g(E_2,E_2)=B(r)^2,\ \ g(E_3,E_3)=C(r)^2,\\
&g(E_1,E_2)=g(E_2,E_3)=g(E_1,E_3)=0. 
\end{aligned}
\end{equation}
For the sake of simplicity, we will omit these subscripts and write $g$ instead of $g_{f,A,B,C}$ in the sequel, provided no confusion arises.\\
We denote the foliation $\mathcal{F}_1:=\mathcal{F}_f|_U$ and its orthogonal foliation (with respect to the metric $g$) by $\mathcal{F}_2$, i.e. the foliation generated by the distribution $\mathbb{R}E_3$, which is integrable since it is one-dimensional. 
\begin{remark}
    The vector fields $E_2$ and $E_3$ can only be defined on $\mathring{D}^2\setminus\{0\}\times S^1$ because $f$ is only defined for $r\in(0,1)$. But $E_1$ can be extended to $\partial(D^2\times S^1)\simeq T^2$.
\end{remark}
By expressing $(E_1,E_2,E_3)$ in terms of $(r,\phi,t)$, we obtain:
\begin{equation}
    \begin{aligned}
g&=g(E_1,E_1)E_1^{*2}+g(E_2,E_2)E_2^{*2}+g(E_3,E_3)E_3^{*2}\\
&=A(r)^2\text{d}\phi^2+B(r)^2(\frac{\text{d}r+f'(r)\text{d}t}{1+f'(r)^2})^2+C(r)^2(\frac{-f'(r)\text{d}r+\text{d}t}{1+f'(r)^2})^2\\
&= A^2\text{d}\phi^2+\frac{B^2+f'^2C^2}{(1+f'(r)^2)^2}\text{d}r^2+\frac{2f'(B^2-C^2)}{(1+f'^2)^2}\text{d}r\text{d}t+\frac{f'(r)^2B^2+C^2}{(1+f'(r)^2)^2}\text{d}t^2.
\end{aligned}
\end{equation}
Hence
\[
g=g_{rr}\text{d}r^2+g_{\phi\phi}\text{d}\phi^2+2g_{rt}\text{d}r\text{d}t+g_{tt}\text{d}t^2,
\]
where
\begin{equation}\label{g}
g_{\phi\phi}=A(r)^2, \quad g_{rr}=\frac{B^2+f'^2C^2}{(1+f'(r)^2)^2},\quad g_{rt}=\frac{f'(B^2-C^2)}{(1+f'^2)^2},\quad g_{tt}=\frac{f'(r)^2B^2+C^2}{(1+f'(r)^2)^2}.
\end{equation}
\subsection{The totally umbilical condition}
By Proposition \ref{prop fond}, the distribution $\text{T}\mathcal{F}_f$ (with its induced foliation denoted by $\mathcal{F}_1$, which is just $\mathcal{F}_f$) and its orthogonal distribution $\text{T}\mathcal{F}_f^\bot$ (induced foliation $\mathcal{F}_2$) determine a conformal product structure if and only if the leaves of $\mathcal{F}_1$ and $\mathcal{F}_2$ are all totally umbilical.

The unit normal vector field of $\mathcal{F}_f|_U$ is $N:=\frac{E_3}{C(r)}$. Then the scalar second fundamental form with respect to $N$ can be computed as follows for arbitrary vector fields $X,Y$ tangent to $\mathcal{F}_f$ 
\begin{equation}\label{II}
I\!I(X,Y):=g(h(X,Y),N)=g(\nabla_XY,N)=\frac{1}{C}g(\nabla_XY,E_3).
\end{equation}
\begin{remark}\label{rq1}
    Since $\mathcal{F}_1$ is of codimension-1, for any $X,Y\in\Gamma(\text{T}\mathcal{F}_1)$, $h(X,Y)=(\nabla_XY)^\bot\in\Gamma(T\mathcal{F}_1^\bot)=\text{span}\{N\}$. By definition, the 2-dimensional leaves of $\mathcal{F}_1$ are totally umbilical if $I\!I(X,Y) = \lambda g(X,Y), \forall X,Y\in\Gamma(\text{T}\mathcal{F}_1)$ for some scalar function $\lambda$. 
\end{remark}
We compute the Lie brackets:
\begin{equation}\label{lie brakets}
\begin{aligned}
[E_1,E_2]&=0,\ \ [E_1,E_3]=0\\
[E_2,E_3]&=(\partial_r+f'\partial_t)(-f'\partial_r+\partial_t)-(-f'\partial_r+\partial_t)(\partial_r+f'\partial_t)\\
&=-f^{\prime\prime}\partial_r+f'f^{\prime\prime}\partial_t\\
&=f^{\prime\prime}\frac{f'^2-1}{1+f'^2}E_2+\frac{2f'f^{\prime\prime}}{1+f'^2}E_3.
\end{aligned}
\end{equation}
We denote $[E_2,E_3]=\lambda E_2+\mu E_3$ with $\lambda=f^{\prime\prime}\frac{f'^2-1}{1+f'^2}$ and $\mu=\frac{2f'f^{\prime\prime}}{1+f'^2}$.
We look for functions $f,A,B,C$ such that the leaves of $\mathcal{F}_1$ and $\mathcal{F}_2$ are totally umbilical.

\begin{remark} 
The leaves of $\mathcal{F}_2$ are trivially totally umbilical because they are 1-dimensional. Since $\text{dim}T_2=1$, it is spanned by a single vector field $V$. The umbilical condition $h(X,Y)=g(X,Y)H$ is automatically satisfied for all $X,Y\in T_2$ by simply setting the mean curvature vector $H:=\frac{h(V,V)}{g(V,V)}$. Therefore, we only need to consider the umbilical condition for $\mathcal{F}_1$.
\end{remark}

\begin{proposition}\label{umbilical prop}
    The foliation $\mathcal{F}_f$ is umbilical on $(\mathring{D}^2\setminus\{0\})\times S^1$ with respect to the metric $g$ defined above if and only if there is a constant $\kappa\in\mathbb{R}\setminus\{0\}$ such that 
    \begin{equation}\label{umbilical}
        B(r)=A(r)\frac{1+f'(r)^2}{\kappa f'(r)},\quad \forall r\in(0,1).
    \end{equation}
\end{proposition}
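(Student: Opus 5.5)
The plan is to compute the scalar second fundamental form $I\!I$ of \eqref{II} on the frame $E_1,E_2$ of $\text{T}\mathcal{F}_f$, which is $g$-orthogonal. By Remark \ref{rq1}, umbilicity means that $I\!I(E_1,E_2)=0$ and $I\!I(E_1,E_1)/A^2=I\!I(E_2,E_2)/B^2$, and I will show this reduces to a first-order ODE for $B/A$. Since $E_1,E_2,E_3$ is orthogonal with lengths depending only on $r$, I use the Koszul formula
\[
2g(\nabla_XY,Z)=Xg(Y,Z)+Yg(X,Z)-Zg(X,Y)+g([X,Y],Z)-g([X,Z],Y)-g([Y,Z],X),
\]
with $Z=E_3$, the brackets \eqref{lie brakets}, and the fact that $E_3(h(r))=-f'(r)h'(r)$ for any function $h$ of $r$.

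First, for $X=E_1$, $Y=E_2$, every term vanishes: the inner products among distinct $E_i$ are zero, $[E_1,E_2]=[E_1,E_3]=0$, and $g([E_2,E_3],E_1)=0$ because $[E_2,E_3]\in\text{span}\{E_2,E_3\}$. Hence $I\!I(E_1,E_2)=0$ automatically. Next, for $X=Y=E_1$, only $-E_3(A^2)$ survives, giving $g(\nabla_{E_1}E_1,E_3)=f'AA'$. For $X=Y=E_2$, the surviving terms are $-E_3(B^2)-2g([E_2,E_3],E_2)=f'(B^2)'-2\lambda B^2$, giving $g(\nabla_{E_2}E_2,E_3)=f'BB'-\lambda B^2$, where $\lambda=f''\frac{f'^2-1}{1+f'^2}$.

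The umbilical condition therefore becomes
\[
f'\frac{A'}{A}=f'\frac{B'}{B}-\lambda ,
\]
that is, $f'\,(\log(B/A))'=\lambda$. Since $f'>0$ on $(0,1)$ by Assumption \ref{assumption0}, I can divide by $f'$. Setting $u=f'$, the partial fraction decomposition
\[
\frac{u^2-1}{u(1+u^2)}=-\frac1u+\frac{2u}{1+u^2}
\]
gives
\[
(\log(B/A))'=\frac{\text{d}}{\text{d}r}\log\frac{1+f'^2}{f'}.
\]
Because $(0,1)$ is connected, this holds if and only if $\log(B/A)-\log\frac{1+f'^2}{f'}$ is constant. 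Equivalently, $B=A\frac{1+f'^2}{\kappa f'}$ for some constant $\kappa\neq0$, and in fact $\kappa>0$ since $A,B>0$.

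Each step is an equivalence, so the converse follows by reading the computation backwards. The only delicate point I expect is the bookkeeping of signs in the Koszul terms, in particular the sign of $E_3(\cdot)=-f'\partial_r$ and of $g([E_2,E_3],E_2)=\lambda B^2$. The ODE integration itself is elementary.
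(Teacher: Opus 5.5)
Your proposal is correct and follows essentially the same route as the paper. Both arguments apply the Koszul formula with $Z=E_3$, use the bracket relations to get $I\!I(E_1,E_2)=0$, equate the principal curvatures $\frac{f'A'}{AC}$ and $\frac{1}{C}\bigl(f'\frac{B'}{B}-\lambda\bigr)$, divide by $f'$, and integrate the resulting logarithmic-derivative identity on $(0,1)$. The only difference is cosmetic: you reach $\frac{\mathrm{d}}{\mathrm{d}r}\log\frac{1+f'^2}{f'}$ by partial fractions, whereas the paper recognizes $\frac{\mathrm{d}}{\mathrm{d}r}\ln\frac{f'}{1+f'^2}$ directly. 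Your extra remark that positivity of $A$, $B$ and $f'$ forces $\kappa>0$ is a correct refinement.
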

\begin{proof}
We recall the Koszul formula:
\[
2g(\nabla_XY,Z)=Xg(Y,Z)+Yg(X,Z)-Zg(X,Y)+g([X,Y],Z)-g([X,Z],Y)-g([Y,Z],X).
\]
Since $\{E_i\}$ are orthogonal, we set $X=E_1$ and $Y=E_2$ and vice versa, combining with \eqref{lie brakets}, we find that $g(\nabla_{E_1} E_2, E_3) =g(\nabla_{E_2}E_1,E_3)=0$. Hence the second fundamental form satisfies $I\!I(E_1,E_2)=I\!I(E_2,E_1)=0$. 
Then the distribution $\mathcal{F}_1$ is umbilical if and only if $\frac{I\!I(E_1,E_1)}{g(E_1,E_1)}=\frac{I\!I(E_2,E_2)}{g(E_2,E_2)}$, which means the principal curvatures of the leaves in $\mathcal{F}_1$ in the direction of $E_1$ and $E_2$ are equal. We denote $k_i=\frac{I\!I(E_i,E_i)}{g(E_i,E_i)}, i\in\{1,2\}$ in order to simplify.

\textbf{i)} $E_1$ direction: we set $X=Y=E_1$ and $Z=E_3$, by the definition of the moving frame and the above computation of Lie brackets \eqref{lie brakets}, we obtain from the Koszul formula:
\[
2g(\nabla_{E_1}E_1,E_3)=-E_3g(E_1,E_1)=-E_3(A^2).
\]
we compute the derivative of $A^2$ in the direction of $E_3$:
\[
E_3(A^2)=(-f'\partial_r+\partial_t)(A(r)^2)=-2f'AA'.
\]
So $2g(\nabla_{E_1}E_1,E_3)=-(-2f'AA')=2f'AA'$, and the principal curvature reads:
\begin{equation}\label{k1}
k_1=\frac{I\!I(E_1,E_1)}{A^2}=\frac{1}{A^2C}g(\nabla_{E_1}E_1,E_3)=\frac{A'f'}{AC}.
\end{equation}

\textbf{ii)} $E_2$ direction: we set $X=Y=E_2$ and $Z=E_3$ in the Koszul formula, and compute
\[
2g(\nabla_{E_2}E_2,E_3)=-E_3g(E_2,E_2)-2g([E_2,E_3],E_2)
\]
The first term is:
\[
    -E_3(B^2)=-(-f'\partial_r+\partial_t)(B^2)=2f'BB'.
\]
and the second term:
\begin{equation}
\begin{aligned}
-2g([E_2,E_3],E_2)&=-2g(\lambda E_2+\mu E_3,E_2)=-2\lambda B^2=2f^{\prime\prime}\frac{1-f'^2}{1+f'^2}B^2.
\end{aligned}
\end{equation}
So we obtain the principal curvature in the direction $E_2$:
\begin{equation}\label{k2}
k_2=\frac{I\!I(E_2,E_2)}{B^2}=\frac{1}{C}(\frac{B'}{B}f'+f^{\prime\prime}\frac{1-f'^2}{1+f'^2}).
\end{equation}
The leaves in $\mathcal{F}_1$ are totally umbilical if and only if $k_1=k_2$, which means:
\[
\frac{A'f'}{AC}=\frac{1}{C}(\frac{B'}{B}f'+f^{\prime\prime}\frac{1-f'^2}{1+f'^2}).
\]
Since $f'(r)\neq0$, this equation is equivalent to:
\[
\frac{A'}{A}=\frac{B'}{B}+\frac{f^{\prime\prime}}{f'}\frac{1-f'^2}{1+f'^2},
\]
which means 
\[
\frac{\text{d}}{\text{d}r}(\text{ln}A)=\frac{\text{d}}{\text{d}r}(\text{ln}B)+\frac{\text{d}}{\text{d}r}(\text{ln}\frac{f'}{1+f'^2}).
\]
By integration, this equation holds if and only if there is a constant $\kappa$ such that
\begin{equation}\label{ode}
B(r)=A(r)\frac{1+f'(r)^2}{\kappa f'(r)}.
\end{equation}
This concludes the proof. 
\end{proof}
From now on we assume that $\mathcal{F}_f$ is totally umbilical on $\mathring{D}^2\setminus\{0\}\times S^1$, i.e., \eqref{ode} holds for $r\in(0,1)$.

By the construction, it is sufficient to check that the metric $g$ is smooth at the central circle $r=0$ and the torus foliation $r=1$.

\subsection{Smoothness condition at $r=0$}
In this section, we will find the condition for extending the metric $g$ associated with $f, A, B, C$ from $\mathring{D}^2\setminus\{0\}\times S^1$ to $\mathring{D}^2\times S^1$ smoothly. 
We consider the Euclidean coordinates $(x,y)$ of $D^2\subset\mathbb{R}^2$. The substitution relations are:
\begin{equation}\label{substitution}
\begin{cases}
    x=r\cos\phi,\\
    y=r\sin\phi,
\end{cases}\quad
\begin{cases}
    \text{d}x=\cos\phi\text{d}r-r \sin\phi\text{d}\phi,\\
    \text{d}y=\sin\phi\text{d}r+r \cos\phi\text{d}\phi.\\
\end{cases}
\end{equation}
By differentiating the equation $r^2=x^2+y^2$, we obtain $2r\text{d}r=2x\text{d}x+2y\text{d}y$, i.e., $r\text{d}r=x\text{d}x+y\text{d}y$. Hence
\[
\text{d}r^2=\frac{(x\text{d}x+y\text{d}y)^2}{x^2+y^2}=\frac{x^2}{x^2+y^2}\text{d}x^2+\frac{y^2}{x^2+y^2}\text{d}y^2+2\frac{xy}{x^2+y^2}\text{d}x\text{d}y.
\]

By \eqref{substitution}, we obtain: 
\begin{equation}
\begin{aligned}
x\text{d}y-y\text{d}x&=(r\cos\phi)(\sin\phi dr+r\cos\phi d\phi) - (r\sin\phi)(\cos\phi dr - r\sin\phi d\phi)\\
&= r\cos\phi\sin\phi dr + r^2\cos^2\phi d\phi - r\sin\phi\cos\phi dr + r^2\sin^2\phi d\phi\\
&=r^2(\cos^2\phi + \sin^2\phi) d\phi\\
&=r^2 d\phi
\end{aligned}
\end{equation}
Hence $\text{d}\phi=\frac{x}{x^2+y^2}\text{d}y-\frac{y}{x^2+y^2}\text{d}x$ and 
\[
\text{d}\phi^2=\left(\frac{-y\text{d}x+x\text{d}y}{x^2+y^2}\right)^2=\frac{y^2}{(x^2+y^2)^2}\text{d}x^2+\frac{x^2}{(x^2+y^2)^2}\text{d}y^2-2\frac{xy}{(x^2+y^2)^2}\text{d}x\text{d}y.
\]

\begin{equation}
\begin{aligned}
g=&g_{rr}\text{d}r^2+g_{\phi\phi}\text{d}\phi^2+2g_{rt}\text{d}r\text{d}t+g_{tt}\text{d}t^2\\
=&g_{rr}\left(\frac{x^2}{x^2+y^2}\text{d}x^2+\frac{y^2}{x^2+y^2}\text{d}y^2+2\frac{xy}{x^2+y^2}\text{d}x\text{d}y\right)\\
&+g_{\phi\phi}\left(\frac{y^2}{(x^2+y^2)^2}\text{d}x^2+\frac{x^2}{(x^2+y^2)^2}\text{d}y^2-2\frac{xy}{(x^2+y^2)^2}\text{d}x\text{d}y\right)\\
&+2g_{rt}\left(\frac{x\text{d}x+y\text{d}y}{\sqrt{x^2+y^2}}\right)\text{d}t+g_{tt}\text{d}t^2.
\end{aligned}
\end{equation}
So if we express the metric $g$ as:
\[
g=g_{xx}\text{d}x^2+g_{yy}\text{d}y^2+g_{tt}\text{d}t^2+2g_{xy}\text{d}x\text{d}y+2g_{xt}\text{d}x\text{d}t+2g_{yt}\text{d}y\text{d}t
\]
then we have:
\begin{equation}\label{terms}
\begin{aligned}
g_{xx}&=g_{rr}\frac{x^2}{x^2+y^2}+\frac{g_{\phi\phi}}{x^2+y^2}\frac{y^2}{x^2+y^2}=(g_{rr}- \frac{g_{\phi\phi}}{x^2+y^2}) \frac{x^2}{x^2+y^2} + \frac{g_{\phi\phi}}{x^2+y^2},\\
g_{yy}&=g_{rr}\frac{y^2}{x^2+y^2}+\frac{g_{\phi\phi}}{x^2+y^2}\frac{x^2}{x^2+y^2}=(g_{rr}-\frac{g_{\phi\phi}}{x^2+y^2})\frac{y^2}{x^2+y^2}+\frac{g_{\phi\phi}}{x^2+y^2},\\
g_{xy}&=(g_{rr}-\frac{g_{\phi\phi}}{x^2+y^2})\frac{xy}{x^2+y^2},\quad\\
g_{xt}&=g_{rt}\frac{x}{\sqrt{x^2+y^2}},\quad g_{yt}=g_{rt}\frac{y}{\sqrt{x^2+y^2}},\quad g_{tt}=g_{tt}.
\end{aligned}
\end{equation}
 The metric $g$ can be extended to $\mathring{D}^2\times S^1$ if and only if all the six terms above are smooth functions with variables $(x,y,t)$ defined on $\mathring{D^2}\times (-\pi,\pi)\subset\mathbb{R}^3$. The main ingredient we need is that the functions $\frac{g_{\phi\phi}}{x^2+y^2}$ , $(g_{rr}-\frac{{g}_{\phi\phi}}{x^2+y^2})\frac{1}{x^2+y^2}$, $\frac{g_{rt}}{\sqrt{x^2+y^2}}$ and $g_{tt}$ can be written as the composition of smooth functions in $\mathcal{C}^\infty((-1,1))$ and  $r^2(x,y,t)=x^2+y^2$, which is also smooth. Thus they are smooth functions defined on $\mathring{D}^2\times S^1$. We claim that the conditions of smoothness at $r=0$ of $g$ can be formalized as follows:
\begin{proposition}\label{condition at r=0}
    We assume that the metric $g$ associated with $f, A, B, C$ is a metric on $\mathring{D^2}\setminus\{0\}\times S^1$ such that the foliation $\mathcal{F}_f$ is umbilical in $\mathring{D^2}\setminus\{0\}\times S^1$, i.e., \eqref{umbilical} holds. Then $g$ can be extended smoothly at the central slice $C=\{0\}\times S^1$ if the following holds:\\
    i) There exists a smooth function $P\in\mathcal{C}^\infty((-1,1))$, such that $P(0)\neq0$ and $A(r)=r\cdot P(r^2)$.\\
    ii) $f^{\prime\prime}(0)=\frac1\kappa$.\\
    iii) There is a smooth positive function $H\in\mathcal{C}^\infty((-1,1))$ such that $C(r)=H(r^2)$.
\end{proposition}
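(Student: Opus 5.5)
The plan is to show that, under i)–iii), each of the four building blocks singled out before the statement, namely
\[
\frac{g_{\phi\phi}}{r^2},\qquad \Big(g_{rr}-\frac{g_{\phi\phi}}{r^2}\Big)\frac{1}{r^2},\qquad \frac{g_{rt}}{r},\qquad g_{tt},
\]
equals $G(r^2)$ for some $G\in\mathcal{C}^\infty((-1,1))$, where $r^2=x^2+y^2$. Then every component in \eqref{terms} is a polynomial in $x,y$ times smooth functions of $x^2+y^2$, hence smooth on $\mathring{D}^2\times S^1$. It remains to check that the extended tensor is positive definite on $C=\{0\}\times S^1$. Throughout I write $s=r^2$.

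\textbf{Step 1: express $f'$ and $B$ as smooth functions of $s$.} By Assumption \ref{assumption1} iii), $f'$ extended to $(-1,1)$ is odd, and its even-order derivatives vanish at $0$. Hadamard's lemma (Lemma \ref{hadamard}) gives $f'(r)=rK(r)$ with $K(0)=f''(0)$. The remark after Lemma \ref{hadamard} shows that $K$ is even with vanishing odd derivatives at $0$. So Lemma \ref{whitney} yields $q\in\mathcal{C}^\infty((-1,1))$ with $f'(r)=r\,q(r^2)$ and $q(0)=f''(0)=1/\kappa\neq 0$ by ii). Since $f'>0$ on $(0,1)$, $q>0$ on $[0,1)$, and $\kappa>0$.

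Substituting into \eqref{umbilical} together with i), the factor $r$ cancels:
\[
B(r)=\frac{rP(r^2)\,(1+r^2q(r^2)^2)}{\kappa\, r\,q(r^2)}=\frac{P(s)(1+s\,q(s)^2)}{\kappa\, q(s)}=:\tilde B(s).
\]
This is smooth in $s$, with $\tilde B(0)=P(0)/(\kappa q(0))=P(0)$. This cancellation is exactly where the umbilicity relation and condition ii) are used.

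\textbf{Step 2: the four blocks.}
\begin{itemize}[label=$\cdot$]
\item $g_{\phi\phi}/r^2=P(s)^2$.
\item $g_{tt}=\dfrac{s q^2\tilde B^2+H^2}{(1+sq^2)^2}$ and $\dfrac{g_{rt}}{r}=\dfrac{q(\tilde B^2-H^2)}{(1+sq^2)^2}$ are visibly smooth in $s$.
\item For the delicate term, $g_{rr}=\dfrac{\tilde B^2+s q^2H^2}{(1+sq^2)^2}$. Hence $D(s):=g_{rr}-P(s)^2$ is smooth with $D(0)=\tilde B(0)^2-P(0)^2=0$. Hadamard's lemma then gives $D(s)=s\,E(s)$ with $E$ smooth, so $\big(g_{rr}-\frac{g_{\phi\phi}}{r^2}\big)\frac{1}{r^2}=E(s)$.
\end{itemize}
I expect this last point to be the main obstacle. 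It is the only place where a genuine cancellation is needed: $\tilde B(0)=P(0)$, i.e. the absence of a conical singularity, which is precisely the content of ii).

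\textbf{Step 3: nondegeneracy on $C$.} At $x=y=0$ the components in \eqref{terms} are
\[
g_{xx}=g_{yy}=P(0)^2>0,\qquad g_{xy}=g_{xt}=g_{yt}=0,\qquad g_{tt}=H(0)^2>0,
\]
because $g_{xy}=xyE(s)$, $g_{xt}=x\,\frac{g_{rt}}{r}$ and $g_{yt}=y\,\frac{g_{rt}}{r}$ all vanish at the origin. Hence the extension is positive definite on $C$, and $g$ extends to a smooth Riemannian metric on $\mathring{D}^2\times S^1$.
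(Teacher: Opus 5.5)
Your proposal is correct and follows essentially the same route as the paper: write $f'(r)=r\,q(r^2)$ with $q(0)=f''(0)=1/\kappa$ (the paper gets this from $f=h(r^2)$ via Lemma \ref{lemma again}, you via Hadamard plus Lemma \ref{whitney}), use \eqref{umbilical} and ii) to see that $g_{rr}-g_{\phi\phi}/r^2$ is a smooth function of $r^2$ vanishing at $0$, divide by $r^2$ with Hadamard's lemma, and check $g_{xx}=g_{yy}=P(0)^2$, $g_{tt}=H(0)^2$ on $C$. Your observation that $q>0$ on $[0,1)$ is a small improvement, since it justifies dividing by $q$ on a neighborhood of $s\ge 0$; the paper tacitly needs this when it asserts that its $\Delta$ and $Q$ are smooth on all of $(-1,1)$.
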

\begin{proof}
We assume that i) ii) and iii) hold, and prove that all the terms in \eqref{terms} are smooth. 
\begin{lemma}
    There is a smooth function $\Delta\in\mathcal{C}^\infty((-1,1))$ such that $\Delta(r^2)=g_{rr}(r)-\frac{g_{\phi\phi}(r)}{r^2}$ for $r\in(0,1)$.
\end{lemma}
\begin{proof}
    We extend the function $f$ as in the proof of Lemma \ref{lemma again}. Then by Lemma \ref{lemma again} we know $f(r)=h(r^2)$ for a smooth function $h\in\mathcal{C}^\infty((-1,1))$, $f'(r)=h'(r^2)2r$. We set $E(x):=2h'(x)$ for $x\in(-1,1)$, hence $f'(r)=r\cdot E(r^2)$ for $r\in[0,1)$. By \eqref{g} and \eqref{ode}, we know 
    \begin{equation}
    \begin{aligned}
    g_{rr}(r)&=\frac{B(r)^2+f'(r)^2C(r)^2}{(1+f'(r)^2)^2}\\
    &=\frac{1}{(1+f'(r)^2)^2}\left[A(r)\frac{1+f'(r)^2}{\kappa f'(r)}\right]^2+\frac{f'(r)^2}{(1+f'(r)^2)^2}C(r)^2\\
    &=\frac{1}{(1+r^2E(r^2)^2)^2}\left[P(r^2)\frac{1+r^2E(r^2)^2}{\kappa E(r^2)}\right]^2+\frac{r^2E(r^2)^2}{(1+r^2E(r^2)^2)^2}H(r^2)^2\\
    &=\frac{P(r^2)^2}{\kappa^2E(r^2)^2}+\frac{r^2E(r^2)^2}{(1+r^2E(r^2)^2)^2}H(r^2)^2
    \end{aligned}
    \end{equation}
    Since $f^{\prime\prime}(r) = E(r^2) + 2r^2 E'(r^2)$, with condition ii), we obtain $E(0)=\frac1\kappa\neq0$. So if we set
    \[
    \Delta(x)=\frac{P(x)^2}{\kappa^2E(x)^2}+\frac{xE(x)^2}{(1+xE(x)^2)^2}H(x)^2-P(x)^2,
    \]
    then $\Delta$ is a smooth function in $\mathcal{C}^\infty((-1,1))$ and satisfies $\Delta(r^2)=g_{rr}-\frac{g_{\phi\phi}}{r^2}$ for any $r\in(0,1)$.
\end{proof}
Since $\Delta(0)=\frac{P(0)^2}{\kappa^2E(0)^2}-P(0)^2=0$, there is a smooth function $S\in\mathcal{C}^\infty((-1,1))$ such that $\Delta(x)=x\cdot S(x)$ for any $x\in(-1,1)$. The terms of $g$ in \eqref{terms} can be written as:
\begin{equation}\label{g xyz}
\begin{aligned}
g_{xx}&=\frac{\Delta(r^2)}{r^2}x^2+P(r^2)^2=S(r^2)x^2+P(r^2)^2,\\ g_{yy}&=\frac{\Delta(r^2)}{r^2}y^2+P(r^2)^2=S(r^2)y^2+P(r^2)^2,\\ g_{xy}&=\frac{\Delta(r^2)}{r^2}xy=S(r^2)xy.
\end{aligned}
\end{equation}
We now deal with the terms $g_{xt}$ and $g_{yt}$, by \eqref{g} we have
\[
\frac{g_{rt}(r)}{r}=\frac{f(r)'(B(r)^2-C(r)^2)}{r(1+f'(r)^2)^2}=\frac{A(r)^2}{\kappa^2r^2E(r^2)}-E(r^2)\frac{H(r^2)^2}{(1+r^2E(r^2)^2)^2}.
\]
Hence if we set $Q(x):=\frac{P(x)^2}{\kappa^2E(x)}-E(x)\frac{H(x)^2}{(1+xE(x)^2)^2}$ for any $x\in(-1,1)$, then $Q\in\mathcal{C}^\infty((-1,1))$ and $\frac{g_{rt}}{r}=Q(r^2)$ for all $r\in(0,1)$. So
\[
g_{xt}(x,y,t)=\frac{g_{rt}}{r}x=Q(x^2+y^2)x,\quad g_{yt}(x,y,t)=\frac{g_{rt}}{r}y=Q(x^2+y^2)y,
\]
hence they are smooth. 
The only remaining term $g_{tt}=g_{tt}(r)$ is smooth since by \eqref{g} and \eqref{ode}, we obtain:
\[
g_{tt}(r)=\frac{f'(r)^2B(r)^2+C(r)^2}{(1+f'(r)^2)^2}=\frac{r^2P(r^2)^2}{\kappa^2}+\frac{H(r^2)^2}{(1+r^2E(r^2)^2)^2}.
\]
This is a smooth function of variables $(x,y,t)$ since $r^2(x,y,t)=x^2+y^2$. This concludes the proof of smoothness of $g$. Note that $g$ is positive definite at $C=\{0\}\times S^1$ because $g_{xx}(0) = P(0)^2 > 0$  ($P(0)\neq 0$ by condition i)) $g_{yy}(0) = P(0)^2 > 0$ and $g_{tt}(0)=H(0)^2>0$. Hence $g$ extends to a well defined Riemannian metric.
\end{proof}
\begin{assumption}\label{assumption2}
We assume the functions $f,A,B,C$ satisfy:\\
i) $f$ satisfying the Assumptions \ref{assumption0} and \ref{assumption1},\\
ii) The umbilical equation \eqref{umbilical} holds for $r\in(0,1)$,\\
iii) Conditions i), ii), iii) in Proposition \ref{condition at r=0} hold.
\end{assumption}
\begin{proposition}\label{total umbilical}
If $f$, $A$, $B$, $C$ are functions such that Assumption \ref{assumption2} holds, then the foliation $\mathcal{F}_f$ is totally umbilical on $(\mathring{D^2}\times S^1, g)$, i.e., the umbilical property holds also at the central slice $C$.
\end{proposition}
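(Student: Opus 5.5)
At the central slice $C=\{0\}\times S^1$ the leaf of $\mathcal{F}_f$ is the level set $t-f(r)=c$, i.e.\ the graph of a smooth function over the disc. I would prove umbilicity there by a continuity argument. The tensor measuring failure of umbilicity is smooth on all of $\mathring{D}^2\times S^1$, and Proposition \ref{umbilical prop} shows it vanishes on the dense open set $U=(\mathring{D}^2\setminus\{0\})\times S^1$.

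The first step is to check that the foliation and its normal are smooth across $C$. By Proposition \ref{condition1} and Assumption \ref{assumption1} iii), $f(r)=h(r^2)$ with $h$ smooth. Hence $F=t-h(x^2+y^2)$ is a smooth submersion on $\mathring{D}^2\times S^1$, since $\partial_t F=1$. Its leaves form a smooth codimension-one foliation extending $\mathcal{F}_1$, with tangent distribution $T\mathcal{F}_f=\ker \text{d}F$. By Proposition \ref{condition at r=0}, $g$ extends to a smooth Riemannian metric. Therefore the unit normal $N=\nabla F/|\nabla F|_g$ is a smooth vector field on the whole solid torus, and it agrees with $E_3/C$ up to sign on $U$.

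Next I would introduce the smooth symmetric tensor on $T\mathcal{F}_f$
\[
T(X,Y):=I\!I(X,Y)-\tfrac12\,\mathrm{tr}_g(I\!I)\,g(X,Y),\qquad I\!I(X,Y)=g(\nabla_XY,N).
\]
It is smooth because $g$, $\nabla$, $N$ and the distribution are smooth. This is a pointwise and tensorial expression, so it does not depend on extending $X,Y$ as leafwise fields. Computing it in any smooth local frame of $\ker\text{d}F$ near $C$, for example $\partial_x+2xh'\partial_t$ and $\partial_y+2yh'\partial_t$, gives smooth coefficients. Umbilicity of a leaf at a point is exactly the condition $T=0$ there.

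By Proposition \ref{umbilical prop}, condition ii) of Assumption \ref{assumption2} gives $k_1=k_2$, and hence $T=0$, on $U$. Since $U$ is dense and the coefficients of $T$ are continuous, $T\equiv0$ also on $C$, so every leaf is totally umbilical at its point on $C$. The remark that $\mathcal{F}_2$ is one-dimensional, together with Proposition \ref{prop fond}, then gives the conformal product structure on the full solid torus.

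The main obstacle is the first step: one has to make sure that the normal field and the frame used to compute $I\!I$ are genuinely smooth at $r=0$, even though $E_2,E_3$ themselves are not defined there. Working with $\text{d}F$ and the $(x,y,t)$ frame, rather than with $E_i$, is what handles this.
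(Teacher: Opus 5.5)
Your proposal is correct and follows the same route as the paper. Once $g$ and $\mathcal{F}_f$ extend smoothly across $C$, the trace-free second fundamental form is a continuous tensor on the whole solid torus; it vanishes on the dense set $(\mathring{D}^2\setminus\{0\})\times S^1$ by Proposition \ref{umbilical prop}, hence everywhere. Your use of the submersion $t-h(x^2+y^2)$ and of the normal $\nabla F/|\nabla F|_g$ simply makes explicit the smoothness at $r=0$ that the paper takes from Propositions \ref{condition1} and \ref{condition at r=0}.
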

\begin{proof}
By Proposition \ref{condition at r=0}, the metric $g$ and the foliation $\mathcal{F}_f$ extend smoothly to the central slice $C = \{0\} \times S^1$. Thus, the trace-free part of the second fundamental form, $\tilde{h} := h - \frac{1}{2}\text{tr}_g(h)g$, is a well-defined continuous tensor field on the entire solid torus $\mathring{D}^2\times S^1$. By Assumption \ref{assumption2} and Proposition \ref{umbilical prop}, $\mathcal{F}_f$ is totally umbilical for $r \in (0,1)$, meaning $\tilde{h} \equiv 0$ on the dense open subset $(\mathring{D}^2\setminus\{0\})\times S^1$. By continuity, $\tilde{h}$ must vanish everywhere. Hence, the leaves of $\mathcal{F}_f$ remain totally umbilical at $C$.
\end{proof}

\section{Conformal Product Structures on $S^3$}
We let $g_1:=\Psi_1^*g$ be the pullback of $g$ to $\mathring{V_1}$ through the diffeomorphism $\Psi_1$ from $\mathring{D}^2\times S^1$ to $\mathring{V}_1$, and similarly let $g_2:=\Psi_2^*g$ be the pullback of $g$ to $\mathring{V}_2$. We define the metric $\tilde{g}$ on $U=S^3\setminus (C_1\cup C_2\cup T^2)=\mathring{V_1}\cup\mathring{V}_2$ as: $\tilde{g}|_{\mathring{V_1}}=g_1$ and $\tilde{g}|_{\mathring{V_2}}=g_2$. In order to construct conformal product structures on $S^3$, what we need to do is to find the appropriate conditions on $f,A,B,C$ to ensure that: \\
i) the foliation $\tilde{\mathcal{F}_f}$ and the metric $g$ on $\mathring{D}^2\times S^1$ are smooth,\\
ii)  $\mathcal{F}_f$ is umbilical on $\mathring{D}^2\times S^1$,\\
iii) $\tilde{g}$ can be extended smoothly as a metric on $S^3$ (which we denote also as $\tilde{g}$) such that $\tilde{\mathcal{F}_f}$ is umbilical on $(S^3,\tilde{g})$.\\

We will work in the coordinate $(\rho,\alpha,\beta)$ of $O\subset S^3$ constructed in \eqref{V,O}. What we need to do is to find appropriate conditions to ensure the smoothness of $\tilde{g}$ on the torus $T^2$ i.e. smoothness of $\Phi^*\tilde{g}$ at the $\rho=0$ locus in the chart $(\Psi,W)$.
Let $g_{\mu\nu}^{(1)}$ and $g_{\mu\nu}^{(2)}$ denote the components of the pullback metrics $\Phi^*g_1$ and $\Phi^*g_2$ in the chart $W$ for $\rho > 0$ and $\rho < 0$ respectively. In order to prove that the extended metric $\tilde{g}$ is smooth at $T^2$ (i.e., $\Phi^*\tilde{g}$ can be extended smoothly at $\rho=0$ in $W$), we need to check that all its components match smoothly across the boundary, i.e.,
\begin{equation}\label{aim}
\lim_{\rho\rightarrow 0^+}\frac{d^n}{d\rho^n}g^{(1)}_{\mu\nu}(\rho)=\lim_{\rho\rightarrow0^-}\frac{d^n}{d\rho^n}g^{(2)}_{\mu\nu}(\rho),\quad\forall n\in\mathbb{N}, 
\end{equation}
for all $\mu,\nu\in\{\rho,\alpha,\beta\}$. 
\begin{proposition}\label{condition r=1}
    The metric $\tilde{g}$ can be extended smoothly at $T^2$ if the following holds in a neighborhood of $T^2$:\\
    i) There exists a smooth function $K$ defined on a neighborhood of $0$, such that $g_{rr}(r)=r^2\cdot K((1-r^2)^2)$ for $r$ sufficiently close to $1^-$.\\
    ii) The limit $\lim_{r\rightarrow1^-}A(r)$ exists and is strictly positive, and 
    \[\lim_{r\rightarrow1^-}\frac{\text{d}^n}{\text{d}r^n}A(r)=0, \quad\forall n\in\mathbb{N}_+
    \]
\end{proposition}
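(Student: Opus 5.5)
The plan is to write $\Phi^*g_1$ and $\Phi^*g_2$ explicitly in the chart $(\rho,\alpha,\beta)$ using the transition maps \eqref{Psi_1 Phi} and \eqref{Psi_2 Phi}. Then I check \eqref{aim} component by component. One point needs attention: the matching of the diagonal angular components appears to require $\kappa^2=1$, which the statement does not list.

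\textbf{Step 1: pulled-back metrics.} For $\rho>0$ we have $r=\sqrt{1-\rho}$, $\phi=\beta$, $t=\alpha$, so $\text{d}r=-\frac{\text{d}\rho}{2\sqrt{1-\rho}}$ and
\[
\Phi^*g_1=\frac{g_{rr}(r)}{4r^2}\text{d}\rho^2+A(r)^2\text{d}\beta^2-\frac{g_{rt}(r)}{r}\text{d}\rho\,\text{d}\alpha+g_{tt}(r)\text{d}\alpha^2,\qquad r=\sqrt{1-\rho}.
\]
For $\rho<0$ we have $r=\sqrt{1+\rho}$, $\phi=\alpha$, $t=\beta$, and
\[
\Phi^*g_2=\frac{g_{rr}(r)}{4r^2}\text{d}\rho^2+A(r)^2\text{d}\alpha^2+\frac{g_{rt}(r)}{r}\text{d}\rho\,\text{d}\beta+g_{tt}(r)\text{d}\beta^2,\qquad r=\sqrt{1+\rho}.
\]

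\textbf{Step 2: the $\rho\rho$ component.} By condition i), $\frac{g_{rr}}{4r^2}=\frac14K((1-r^2)^2)$. On both sides $(1-r^2)^2=\rho^2$, so both sides equal $\frac14K(\rho^2)$, a single smooth function. Hence \eqref{aim} holds for this component. At $\rho=0$ this coefficient is $\frac14K(0)$. Since $g_{rr}>0$ for $r<1$ we have $K(0)\ge0$, but positive-definiteness of the extension needs $K(0)>0$, which I would impose as part of condition i).

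\textbf{Step 3: rewrite $g_{rt}$ and $g_{tt}$ using \eqref{umbilical}.} From \eqref{umbilical}, $\frac{f'^2B^2}{(1+f'^2)^2}=\frac{A^2}{\kappa^2}$. Set $\Theta:=\frac{f'^2C^2}{(1+f'^2)^2}$. Then \eqref{g} gives
\[
g_{rr}=\frac{A^2}{\kappa^2f'^2}+\Theta,\qquad g_{rt}=\frac{A^2}{\kappa^2f'}-\frac{\Theta}{f'},\qquad g_{tt}=\frac{A^2}{\kappa^2}+\frac{\Theta}{f'^2}.
\]
\begin{itemize}
\item By Assumption \ref{assumption1} iv), $1/f'$ is flat at $r=1^-$: all its derivatives tend to $0$.
\item By condition ii), $A$ has bounded derivatives near $1$.
\item Therefore $\frac{A^2}{\kappa^2f'^2}$ is flat, and $\Theta=g_{rr}-\frac{A^2}{\kappa^2f'^2}$ has bounded derivatives, because $g_{rr}$ is smooth up to $r=1$ by condition i).
\item By the Leibniz rule, $g_{rt}$ and $\Theta/f'^2$ are flat at $r=1^-$, and consequently so is $g_{rt}/r$.
\end{itemize}

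\textbf{Step 4: transfer flatness to $\rho$.} Composing with $r=\sqrt{1\mp\rho}$, which is smooth near $\rho=0$, preserves flatness by Fa\`a di Bruno. So the off-diagonal coefficients $\mp g_{rt}/r$ have all one-sided $\rho$-derivatives tending to $0$ on both sides. The other side of each off-diagonal entry is identically zero, so these components match.

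\textbf{Step 5: the angular diagonal components (the expected obstacle).} By condition ii), $A(\sqrt{1\pm\rho})^2$ has limit $A(1)^2>0$ and all higher $\rho$-derivatives tend to $0$.
\begin{itemize}
\item For $\text{d}\beta^2$: the left side is $A(\sqrt{1-\rho})^2$, and the right side is $g_{tt}(\sqrt{1+\rho})=\frac{A^2}{\kappa^2}+\text{flat}$.
\item For $\text{d}\alpha^2$: the situation is symmetric.
\item Both sides therefore have vanishing higher derivatives. The values at $\rho=0$ are $A(1)^2$ and $A(1)^2/\kappa^2$, which agree exactly when $\kappa^2=1$.
\end{itemize}
Since the same $g$, hence the same $\kappa$, is used on both tori, I would normalize $\kappa=\pm1$. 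This is compatible with Proposition \ref{condition at r=0} ii), which then reads $f''(0)=\pm1$. Alternatively, one can rescale $\partial_t$ on one side, but the clean route is to add $\kappa^2=1$.

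\textbf{Conclusion.} With $\kappa^2=1$ and $K(0)>0$, all components of $\Phi^*\tilde g$ extend smoothly across $\rho=0$. At $\rho=0$ the extension is the diagonal form $\frac14K(0)\text{d}\rho^2+A(1)^2(\text{d}\alpha^2+\text{d}\beta^2)$, which is positive definite. The toral symmetry noted in the remark after Proposition \ref{condition2} extends this from the chart $O$ to all of $T^2$.
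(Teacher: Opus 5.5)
Your proposal is correct and follows essentially the same route as the paper. You pull back to the chart $W$, get $g_{\rho\rho}=\frac14K(\rho^2)$ on both sides, use \eqref{umbilical} and the flatness of $1/f'$ at $r=1^-$ to show the off-diagonal terms and $g_{tt}-A^2/\kappa^2$ are flat, and observe that matching the angular diagonal components at $\rho=0$ forces $\kappa^2=1$; the paper derives and imposes this ($\kappa=1$) inside its proof rather than in the statement. Your extra point that $K(0)>0$ is needed for positive-definiteness is valid and left implicit by the paper (it holds in the explicit construction with $K\equiv K_0>0$), and positivity of $A$, $B$, $f'$ in \eqref{umbilical} forces $\kappa>0$, so $\kappa=1$ rather than $\pm1$.
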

\begin{proof}

We assume that $\tilde{g}$ is smooth at $T^2$, which means that all of its terms in the coordinate $(\rho,\alpha,\beta)$ are smooth. We first compute $g_{\rho\rho}$ in $V_1$ and $V_2$ respectively:

If $\rho>0$ i.e., $(\rho,\alpha,\beta)\in V_1$, by \eqref{Psi_1 Phi}, we obtain the substitution relations:
\begin{equation*}
\begin{aligned}
&r\circ\Psi_1\circ\Phi(\rho,\alpha,\beta)=r(\sqrt{1-\rho}e^{i\beta},e^{i\alpha})=\sqrt{1-\rho},\\
&\phi\circ\Psi_1\circ\Phi(\rho,\alpha,\beta)=\phi(\sqrt{1-\rho}e^{i\beta},e^{i\alpha})=\beta,\\
&t\circ\Psi_1\circ\Phi(\rho,\alpha,\beta)=\text{Arg}(\gamma_1\circ\Psi_1\circ\Phi)=\alpha.
\end{aligned}
\end{equation*}
Hence we have 
\begin{equation*}
(\Psi_1\circ\Phi)^*\text{d}r=\text{d}(r\circ\Psi_1\circ\Phi)=\text{d}(\sqrt{1-\rho})=\frac{-1}{2\sqrt{1-\rho}}\text{d}\rho,
\end{equation*}
Similarly we obtain $(\Psi_1\circ\Phi)^*\text{d}\phi=\text{d}(\phi\circ\Psi_1\circ\Phi)=\text{d}\beta$, $(\Psi_1\circ\Phi)^*\text{d}t=\text{d}(t\circ\Psi_1\circ\Phi)=\text{d}\alpha$. Hence $(\Psi_1\circ\Phi)^*\text{d}r^2=\frac{1}{4(1-\rho)}\text{d}\rho^2$. By \eqref{g}, we obtain:
\begin{equation}\label{g_1}
\begin{aligned}
\Phi^*g_1&=(\Psi_1\circ\Phi)^*g=(\Psi_1\circ\Phi)^*\left(g_{rr}\text{d}r^2+g_{\phi\phi}\text{d}\phi^2+2g_{rt}\text{d}r\text{d}t+g_{tt}\text{d}t^2\right)\\
&=(g_{rr}\circ\Psi_1\circ\Phi)\left( \frac{-1}{2\sqrt{1-\rho}}d\rho\right)^2 +(g_{\phi\phi}\circ\Psi_1\circ\Phi)d\beta^2\\
&+2(g_{rt}\circ\Psi_1\circ\Phi)\left(\frac{-1}{2\sqrt{1-\rho}}\text{d}\rho\right)\text{d}\alpha+ (g_{tt}\circ\Psi_1\circ\Phi)\text{d}\alpha^2
\end{aligned}
\end{equation}
If we write $\Phi^*g_1$ as: $\Phi^*g_1=g_{\rho\rho}^{(1)}\text{d}\rho^2+g_{\alpha\alpha}^{(1)}\text{d}\alpha^2+g_{\beta\beta}^{(1)}\text{d}\beta^2+2g_{\rho\beta}^{(1)}\text{d}\rho\text{d}\beta+2g_{\rho\alpha}^{(1)}\text{d}\rho\text{d}\alpha+2g_{\alpha\beta}^{(1)}\text{d}\alpha\text{d}\beta$. Then by comparing with \eqref{g_1} we have $g_{\rho\beta}^{(1)}=g_{\alpha\beta}^{(1)}=0$ and
\begin{equation}\label{g_xx g_rr1}
g_{\rho\rho}^{(1)}(\rho,\alpha,\beta)=\frac{g_{rr}\circ\Psi_1\circ\Phi(\rho,\alpha,\beta)}{4(1-\rho)}=\frac{g_{rr}(\sqrt{1-\rho})}{4(1-\rho)}.
\end{equation}

For $\rho<0$ i.e., $(\rho,\alpha,\beta)\in V_2$, by \eqref{Psi_2 Phi}, we obtain the substitution relation of coordinates: $r\circ\Psi_2\circ\Phi(\rho,\alpha,\beta)=\sqrt{1+\rho}$, $\phi\circ\Psi_2\circ\Phi=\alpha$, $t\circ\Psi_2\circ\Phi=\text{Arg}(\gamma\circ\Psi_2\circ\Phi)=\beta$, we have 
\[
(\Psi_2\circ\Phi)^*\text{d}r=\frac{1}{2\sqrt{1+\rho}}\text{d}\rho,\quad(\Psi_2\circ\Phi)^*\text{d}\phi=\text{d}\alpha, \quad(\Psi_2\circ\Phi)^*\text{d}t=\text{d}\beta,
\]
hence $(\Psi_2\circ\Phi)^*\text{d}r_2^2=\frac{1}{4(1+\rho)}\text{d}\rho^2$. 
\begin{equation}\label{g_2}
\begin{aligned}
\Phi^*g_2=&(\Psi_{2}\circ\Phi)^*g=(\Psi_2\circ\Phi)^*\left(g_{rr}\text{d}r^2+g_{\phi\phi}\text{d}\phi^2+2g_{rt}\text{d}r\text{d}t+g_{tt}\text{d}t^2\right)\\
=&(g_{rr}\circ\Psi_2\circ\Phi)\left( \frac{1}{2\sqrt{1+\rho}}\text{d}\rho \right)^2+(g_{\phi\phi}\circ\Psi_2\circ\Phi)d\alpha^2\\
&+2(g_{rt}\circ\Psi_2\circ\Phi)\left(\frac{1}{2\sqrt{1+\rho}}\text{d}\rho \right)\text{d}\beta+ (g_{tt}\circ\Psi_2\circ\Phi)\text{d}\beta^2
\end{aligned}
\end{equation}
If we write the metric $\Phi^*g_2$ as: $\Phi^*g_2=g_{\rho\rho}^{(2)}\text{d}\rho^2+g_{\alpha\alpha}^{(2)}\text{d}\alpha^2+g_{\beta\beta}^{(2)}\text{d}\beta^2+2g_{\rho\alpha}^{(2)}\text{d}\rho\text{d}\alpha+2g_{\rho\beta}^{(2)}\text{d}\rho\text{d}\beta+2g_{\alpha\beta}^{(2)}\text{d}\alpha\text{d}\beta$.
Then by comparing with \eqref{g_2}, we have $g_{\rho\alpha}^{(2)}=g_{\alpha\beta}^{(2)}=0$ and 
\begin{equation}\label{g_xx g_rr2}
g_{\rho\rho}^{(2)}(\rho,\alpha,\beta)=\frac{g_{rr}\circ\Psi_2\circ\Phi(\rho,\alpha,\beta)}{4(1+\rho)}=\frac{g_{rr}(\sqrt{1+\rho})}{4(1+\rho)}.
\end{equation}
Combining \eqref{g_xx g_rr1} and \eqref{g_xx g_rr2}, we can define a unified function $g_{\rho\rho}(\rho)$ on $(-1,1)\setminus\{0\}$ representing the $\rho\rho$-component of the metric away from the boundary $T^2$:
\begin{equation}\label{g_xx g_rr}
   g_{\rho\rho}(\rho)=g_{rr}(\sqrt{1-|\rho|})\cdot\frac{1}{4(1-|\rho|)},\quad \rho\neq0
\end{equation}

\begin{lemma}\label{lem:5.2}
$g_{\rho\rho}(\rho)$ can be extended to a smooth function on the neighborhood $(-\epsilon, \epsilon)$ if $g_{rr}(r)$ can be expressed as:
\begin{equation}\label{g_rr}
g_{rr}(r)=r^2\cdot K((1-r^2)^2)
\end{equation}
for $r \in (\sqrt{1-\epsilon}, 1)$, where $K$ is a smooth function defined on $(-\epsilon^2, \epsilon^2)$. In this case, $g_{\rho\rho}(\rho)=\frac14 K(\rho^2)$.
\end{lemma}
\begin{proof}
For any $\rho\in(0,\epsilon)$, i.e., in the region corresponding to $\mathring{V}_1$, the radial coordinate function $r$ is pulled back via the transition map to the chart $W$ as $r \circ \Psi_1 \circ \Phi(\rho, \alpha, \beta) = \sqrt{1-\rho} \in (\sqrt{1-\epsilon}, 1)$.
By substituting \eqref{g_rr} into \eqref{g_xx g_rr1}, we obtain:
\[
g_{\rho\rho}^{(1)}(\rho) = \frac{g_{rr}(r \circ \Psi_1 \circ \Phi)}{4(1-\rho)} = \frac{g_{rr}(\sqrt{1-\rho})}{4(1-\rho)} = \frac{(\sqrt{1-\rho})^2 \cdot K\left((1-(\sqrt{1-\rho})^2)^2\right)}{4(1-\rho)} = \frac{1}{4} K(\rho^2).
\]
Similarly for $\rho\in(-\epsilon,0)$, corresponding to $\mathring{V}_2$, the radial coordinate is pulled back as $r \circ \Psi_2 \circ \Phi(\rho, \alpha, \beta) = \sqrt{1+\rho} \in (\sqrt{1-\epsilon}, 1)$.
By substituting into \eqref{g_xx g_rr2}, we have:
\[
g_{\rho\rho}^{(2)}(\rho) = \frac{g_{rr}(r \circ \Psi_2 \circ \Phi)}{4(1+\rho)} = \frac{g_{rr}(\sqrt{1+\rho})}{4(1+\rho)} = \frac{(1+\rho)\cdot K(\rho^2)}{4(1+\rho)} = \frac{1}{4} K(\rho^2).
\]
Thus, for all $\rho \in (-\epsilon, \epsilon) \setminus \{0\}$, the components $g_{\rho\rho}^{(1)}(\rho)$ and $g_{\rho\rho}^{(2)}(\rho)$ symmetrically coincide with the function $\frac14 K(\rho^2)$.
Since $K$ is smooth on $(-\epsilon^2, \epsilon^2)$ and the map $\rho \mapsto \rho^2$ is smooth, their composition $g_{\rho\rho}(\rho) = \frac14 K(\rho^2)$ is naturally smooth on the entire neighborhood $(-\epsilon, \epsilon)$.
\end{proof}

By \eqref{g}, we know $g_{rr}=\frac{B^2+f'^2C^2}{(1+f'^2)^2}$, which is equivalent to:
\begin{equation}\label{C(r)}
C(r)^2=\frac{(1+f'^2)^2}{f'^2}g_{rr}(r)-\frac{B(r)^2}{f'(r)^2}.
\end{equation}
If \eqref{g_rr} holds for a smooth function $K\in\mathcal{C}^\infty((-1,1))$, then we first express $g_{rt}$ and $g_{tt}$ by $g_{rr}$, hence by $K$.
With \eqref{ode} and \eqref{g}, we obtain:

\begin{equation}\label{g_rt}
\begin{aligned}
g_{rt}(r)&\overset{\eqref{g}}{=}\frac{f'(r)(B(r)^2-C(r)^2)}{(1+f'(r)^2)^2}\\
&\overset{\eqref{C(r)}}{=}\frac{f'(r)}{(1+f'(r)^2)^2}\left[B^2-\left(\frac{(1+f'(r)^2)^2}{f'(r)^2}g_{rr}-\frac{B^2}{f'(r)^2}\right)\right]\\
&=\frac{B^2}{f'(r)(1+f'(r)^2)}-\frac{g_{rr}}{f'(r)}\\
&\overset{\eqref{ode}}{=}\frac{1}{f'(r)}\left[\frac{A(r)^2(1+f'(r)^2)}{\kappa^2 f'(r)^2}-g_{rr}(r)\right]
\end{aligned}
\end{equation}
\begin{equation}\label{g_tt}
    \begin{aligned}
        g_{tt}(r)&\overset{\eqref{g}}{=}\frac{f'(r)^2B(r)^2+C(r)^2}{(1+f'(r)^2)^2}\\
        &\overset{\eqref{C(r)}}{=}\frac{1}{(1+f'(r)^2)^2}\left[f'(r)^2B(r)^2+\frac{(1+f'(r)^2)^2}{f'(r)^2}g_{rr}-\frac{B(r)^2}{f'(r)^2}\right]\\
        &\overset{\eqref{ode}}{=}\left( \frac{A(r)^2(1+f'(r)^2)^2}{\kappa^2 f'(r)^2} \right) \frac{f'(r)^2 - 1}{f'(r)^2(1+f'(r)^2)} + \frac{g_{rr}(r)}{f'(r)^2}\\
        &=\frac{A(r)^2}{\kappa^2} + \frac{1}{f'(r)^2} \left[ g_{rr}(r) - \frac{A(r)^2}{\kappa^2 f'(r)^2} \right]
    \end{aligned}
\end{equation}
We write the expression of $g_{tt}$ \eqref{g_tt} as: $g_{tt}(r)=\frac{A(r)^2}{\kappa^2}+R(r)$ where 
\begin{equation}
    R(r)=\frac{g_{rr}(r)}{f'(r)^2}-\frac{A(r)^2}{\kappa^2 f'(r)^4}.
\end{equation}

Now return to \eqref{g_1} and \eqref{g_2}, we can write $\Phi^*g_1$ and $\Phi^*g_2$ as:
\[
\Phi^*g_1=g_{\rho\rho}^{(1)}\text{d}\rho^2+g_{\alpha\alpha}^{(1)}\text{d}\alpha^2+g_{\beta\beta}^{(1)}\text{d}\beta^2+2g_{\rho\alpha}^{(1)}\text{d}\rho\text{d}\alpha
\]
where
\begin{equation}\label{main V_1}
    \begin{aligned}
    g_{\rho\rho}^{(1)}&=g_{rr}(\sqrt{1-\rho})\frac{1}{4(1-\rho)}=(1-\rho)K(\rho^2)\frac{1}{4(1-\rho)}=\frac14K(\rho^2)\\
        g_{\alpha\alpha}^{(1)}&=g_{tt}(\sqrt{1-\rho})=\frac{A(\sqrt{1-\rho})^2}{\kappa^2}+R(\sqrt{1-\rho})\\
        &=\frac{A(\sqrt{1-\rho})^2}{\kappa^2}+\frac{1}{f'(\sqrt{1-\rho})^2}[(1-\rho)K(\rho^2)-\frac{A(\sqrt{1-\rho})^2}{\kappa^2 f'(\sqrt{1-\rho})^2}]\\
        g_{\beta\beta}^{(1)}&=A(\sqrt{1-\rho})^2\\
       g_{\rho\alpha}^{(1)}&=\frac{-1}{2\sqrt{1-\rho}}g_{rt}(\sqrt{1-\rho})\\
        &=\frac{-1}{2\sqrt{1-\rho} \cdot f'(\sqrt{1-\rho})} \left[ \frac{A(\sqrt{1-\rho})^2 (1+f'^2)}{\kappa^2 f'^2}-(1-\rho) K(\rho^2)\right]
    \end{aligned}
\end{equation}
Similarly
\[
\Phi^*g_2=g_{\rho\rho}^{(2)}\text{d}\rho^2+g_{\alpha\alpha}^{(2)}\text{d}\alpha^2+g_{\beta\beta}^{(2)}\text{d}\beta^2+2g_{\rho\beta}^{(2)}\text{d}\rho\text{d}\beta
\]
where
\begin{equation}\label{main V_2}
    \begin{aligned}
        g_{\rho\rho}^{(2)}&=g_{rr}(\sqrt{1+\rho})\cdot\frac{1}{4(1+\rho)}=\left[ (1+\rho)K(\rho^2)\right]\frac{1}{4(1+\rho)} = \frac{1}{4}K(\rho^2)\\
        g_{\alpha\alpha}^{(2)}&= A(\sqrt{1+\rho})^2\\
        g_{\beta\beta}^{(2)}&=g_{tt}(\sqrt{1+\rho})=\frac{A(\sqrt{1+\rho})^2}{\kappa^2}+R(\sqrt{1+\rho})\\
&=\frac{A(\sqrt{1+\rho})^2}{\kappa^2} + \frac{1}{f'(\sqrt{1+\rho})^2} \left[(1+\rho)K(\rho^2) - \frac{A(\sqrt{1+\rho})^2}{\kappa^2 f'(\sqrt{1+\rho})^2} \right]\\
g_{\rho\beta}^{(2)}&= \frac{1}{2\sqrt{1+\rho}}g_{rt}(\sqrt{1+\rho})\\
&=\frac{1}{2\sqrt{1+\rho} \cdot f'(\sqrt{1+\rho})} \left[ \frac{A(\sqrt{1+\rho})^2 (1+f'^2)}{\kappa^2 f'^2}-(1+\rho) K(\rho^2)\right]
    \end{aligned}
\end{equation}

The metric $\tilde{g}$ can be extended to a smooth metric on $S^3$ if and only if $\Phi^*\tilde{g}$ can be extended to metric on $W$, i.e., for $\mu,\nu\in\{\rho,\alpha,\beta\}$, we have 
\[
\lim_{\rho\rightarrow0^+}\frac{\text{d}^n}{\text{d}\rho^n}g^{(1)}_{\mu\nu}(\rho)=\lim_{\rho\rightarrow0^-}\frac{\text{d}^n}{\text{d}\rho^n}g^{(2)}_{\mu\nu}(\rho).
\]
By Condition iv) of Assumption \ref{assumption1}, we can prove the following two lemma:
\begin{lemma}\label{lem}
\begin{equation}\label{R vanish}
\lim_{\rho\rightarrow0^\mp}\frac{\text{d}^n}{\text{d}\rho^n}R(\sqrt{1\pm\rho})=0,\quad \forall n\in\mathbb{N}
\end{equation}
\end{lemma} 
\begin{proof}
We first prove that:
    \[
    \lim_{r\rightarrow1^-}R^{(k)}(r)=0,\quad \forall k\in\mathbb{R} .
    \]
By Proposition \ref{condition2}, $\lim_{r\rightarrow1^-}\frac{\text{d}^k}{\text{d}r^k}\frac{1}{f'(r)}=0,\quad \forall k\in\mathbb{N}$, we can check that: 
\[
\lim_{r\rightarrow1^-}\frac{\text{d}^k}{\text{d}r^k}\frac{1}{f'(r)^2}=0,\quad \forall k\in\mathbb{N}.
\]
We know that $g_{rr}(r)$ is smooth in a neighborhood of $T^2$ by \eqref{g_rr}, and $A(r)^2$ is smooth by definition. Hence, $\lim_{r\rightarrow1^-}R^{(k)}(r)=0$.\\
Then by a simple induction we can prove \eqref{R vanish}.
\end{proof}
\begin{lemma}\label{5.4}
\begin{equation}\label{intersection}
\lim_{\rho \to 0^+} \frac{d^n}{d\rho^n} g_{\rho\alpha}^{(1)}(\rho) = 0,\quad \lim_{\rho\to0^-} \frac{d^n}{d\rho^n} g_{\rho\beta}^{(2)}(\rho)=0,\quad\forall n\in\mathbb{N}
\end{equation}
\end{lemma}
We omit this proof as it is similar to Lemma \ref{lem}.


In order for the metric to match continuously at $T^2$ ($\rho=0$), the limits of $g_{\alpha\alpha}$ from both sides must be equal:
\[
\lim_{\rho\rightarrow0^+} g_{\alpha\alpha}^{(1)}(\rho) = \lim_{\rho\rightarrow0^-} g_{\alpha\alpha}^{(2)}(\rho).
\]
By \eqref{main V_1} and \eqref{main V_2}, and using the fact that $\lim_{\rho\to 0} R = 0$ from Lemma \ref{lem}, this gives:
\[
\frac{A(1)^2}{\kappa^2} = A(1)^2.
\]
Since $A(1) > 0$ by condition ii), this uniquely forces the geometric constraint $\kappa^2 = 1$. Without loss of generality, we set $\kappa = 1$ from now on.

Assuming $\kappa = 1$, by \eqref{intersection} and \eqref{R vanish}, the higher-order smoothness of the metric at $T^2$ reduces to the matching of derivatives:
\begin{equation}\label{condition A}
\lim_{\rho\rightarrow0^+}\frac{\text{d}^n}{\text{d}\rho^n}A(\sqrt{1-\rho})^2=\lim_{\rho\rightarrow0^-}\frac{\text{d}^n}{\text{d}\rho^n}A(\sqrt{1+\rho})^2, \quad \forall n\in\mathbb{N},
\end{equation}
which is immediately satisfied by condition ii) of Proposition \ref{condition r=1}.
This concludes the proof.
\end{proof}

From now on we assume that the following holds:
\begin{assumption}\label{assumption3}
The foliation $\mathcal{F}_f$ and metric $g$ are defined by functions $A$, $B$, $C$, $f$ which satisfy Assumption \ref{assumption2}, such that conditions i), ii) in  Proposition \ref{condition r=1} also hold. 
\end{assumption}
Then the metric $\tilde{g}$ can be extended to a smooth metric on $S^3$, which we denote also $\tilde{g}$. Its pull back to $W$ is denoted by $\Phi^*\tilde{g}=g_{\rho\rho}\text{d}\rho^2+g_{\alpha\alpha}\text{d}\alpha^2+g_{\beta\beta}\text{d}\beta^2+2g_{\rho\alpha}\text{d}\rho\text{d}\alpha+2g_{\rho\beta}\text{d}\rho\text{d}\beta$, whence $g_{\mu\nu}$ is the unique smooth function obtained by extending $g_{\mu\nu}(\rho)=\begin{cases}
    g_{\mu\nu}^{(1)}(\rho), \rho>0\\
    g_{\mu\nu}^{(2)}(\rho),\rho<0
\end{cases}$. We will establish that $\tilde{\mathcal{F}_f}$ is umbilical on the whole $(S^3,\tilde{g})$.
\begin{remark}
    If the functions $f$, $A$, $B$, $C$ satisfy Assumption \ref{assumption3}, then $\tilde{\mathcal{F}}_f$ is umbilical on $(S^3,\tilde{g})$. By the exact same continuity argument used at the central slice, the boundary leaf $T^2$ is totally umbilical because the trace-free second fundamental form $\tilde{h}$ is a continuous tensor field on $S^3$ that vanishes identically on the dense open subset $S^3 \setminus T^2$.
\end{remark}

\section{Precise construction}
We will construct functions $f$, $A$, $B$, $C$ that satisfy Assumption \ref{assumption3}: \\
For example, if we set
\begin{equation}\label{A(r)}
A(r)=\frac{1}{I_0}\int_0^r\text{exp}(-\frac{1}{1-t^2})\text{d}t
\end{equation}
where $I_0=\int_0^1\text{exp}(-\frac{1}{1-t^2})\text{d}t$. Then $B(r)$ is determined by \eqref{ode} up to the constant $\kappa\neq0$. Recall from the matching condition at $T^2$ in Section 5 that we necessarily have $\kappa^2=1$. Without loss of generality, we take $\kappa=1$. And we set $f$ as in \eqref{f}. 

Then we choose a smooth function $\eta:[0,1]\rightarrow[0,1]$ such that: $\eta(u)=0$, $\forall u\in[0,\frac13]$ and $\eta(u)=1$, $\forall u\in[\frac23,1]$. We define:
\begin{equation}\label{g_rr(r)}
g_{rr}(r)=(1-\eta(r^2))A(r)^2\frac{1+f'(r)^2}{f'(r)^2}+\eta(r^2)r^2\cdot K((1-r^2)^2)
\end{equation}

We choose a positive constant $K_0$ that satisfies: $r^2K_0-\frac{A^2}{f'^2}>0$ for $r\ge\sqrt{\frac13}$. Let $K(x) \equiv K_0$ be the constant function. Then the term $\eta(r^2) \left( r^2 K_0 - \frac{A^2}{f'^2} \right)$ is positive for any $r\in(\sqrt\frac13,1)$. Then by \eqref{C(r)}, we have:
\begin{equation}
\begin{aligned}
    C(r)^2&\overset{\eqref{C(r)}}{=}\frac{(1+f'^2)^2}{f'^2}g_{rr}(r)-\frac{B(r)^2}{f'(r)^2}\\
    &\overset{\eqref{ode}}{=}\frac{(1+f'^2)^2}{f'^2}g_{rr}(r)-\frac{A^2(1+f'^2)^2}{f'^4}\\
    &\overset{\eqref{g_rr(r)}}{=}\frac{(1+f'^2)^2}{f'^2} \left[(1-\eta(r^2)) A^2 \frac{1+f'^2}{f'^2}+\eta(r^2)r^2K_0\right]-\frac{A^2(1+f'^2)^2}{f'^4}\\
    &=\frac{(1+f'^2)^2}{f'^2} \left[\eta(r^2)\left( r^2 K_0-\frac{A^2}{f'^2}\right)+A(r)^2(1-\eta(r^2))\right].
\end{aligned}
\end{equation}
Hence $C(r)^2$ is positive for any $r\in(0,1)$.

Then we need to prove that the Assumption \ref{assumption3} is satisfied:

\textbf{i)} We first establish the conditions in Proposition \ref{condition at r=0}.

Since the integrand $\text{exp}(-\frac{1}{1-t^2})$ is a smooth even function, $A(r)$ is a smooth odd function with $A(0)=0$. By Hadamard's lemma (Lemma \ref{hadamard}), there exists a smooth function $\tilde{P}$ such that $A(r) = r \tilde{P}(r)$. Because $A(r)$ is odd, $\tilde{P}(r)$ is necessarily an even smooth function. Then, by Lemma \ref{whitney}, there exists a smooth function $P$ such that $\tilde{P}(r) = P(r^2)$, which yields $A(r) = r P(r^2)$. Furthermore, $P(0) = \tilde{P}(0) = A'(0) =\frac{1}{I_0e} \neq 0$. Thus, condition i) in Proposition \ref{condition at r=0} is satisfied.

By the construction of $f$ \eqref{f}, we have $f'(r)=r\text{exp}(\frac{r^2}{1-r^2})$, it satisfies $f^{\prime\prime}(0)=1$. Since we have set $\kappa =1$, so $f^{\prime\prime}(0)=\frac{1}{\kappa}$. Thus the condition ii) in Proposition \ref{condition at r=0} has been satisfied. 

Now we consider the condition iii). For $r\le\sqrt{\frac13}$, we have $\eta(r^2)=0$, then $g_{rr}(r)=A^2\frac{1+f'^2}{f'^2}$, by substituting into \eqref{C(r)}, we obtain $C(r)^2=B(r)^2$. Hence smoothness. For $A(r)$ defined by \eqref{A(r)}, it satisfies $A(0)=0$.
With $\kappa=1$, Equation \eqref{ode} yields：
\[
C(r)=B(r)=A(r)\frac{1+f'(r)^2}{f'(r)}=P(r^2)\frac{1+r^2E(r^2)^2}{E(r^2)}, \quad\forall r\in(0,\sqrt\frac13).
\]
Thus $C(r)=H(r^2), \forall r\in(0,\sqrt\frac13)$ for the smooth function $H\in\mathcal{C}^\infty((-1,1))$ defined by $H(x):=P(x)\frac{1+xE(x)^2}{E(x)}$ with $H(0)=\frac{P(0)}{E(0)}>0$. The apparent singularity of $C(r)$ at $r=0$ caused by $f'(r)$ in the denominator is smoothly cancelled by the zero of $A(r)$ at the origin. Hence the condition iii) of Proposition \ref{condition at r=0} holds, thus $g$ is smooth at $C$. Hence $\tilde{g}$ is smooth at $C_1$ and $C_2$.

\textbf{ii)} Then we establish conditions in Proposition \ref{condition r=1}.\\
For $r\ge \sqrt{\frac23}$, we have $\eta(r^2)=1$. Hence $g_{rr}(r)=r^2\cdot K_0$, the term $g_{rr}$ satisfies the condition i) in Proposition \ref{condition r=1}.
\begin{lemma}
The function $A(r)$ in \eqref{A(r)} satisfies the condition ii) of Proposition \ref{condition r=1}.
\end{lemma}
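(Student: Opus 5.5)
We must show two things: that $\lim_{r\to1^-}A(r)$ exists and is strictly positive, and that every derivative $A^{(n)}(r)$, $n\ge1$, tends to $0$ as $r\to1^-$.

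The first part is immediate. The integrand $\exp(-\frac{1}{1-t^2})$ is continuous and positive on $[0,1)$ and bounded by $1$, so it is integrable. Hence $A(r)\to\frac{1}{I_0}\int_0^1\exp(-\frac{1}{1-t^2})\,\mathrm{d}t=1>0$. This is also the reason for the normalization by $I_0$.

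For the derivatives, write $u(t):=\exp(-\frac{1}{1-t^2})$, so that $A'(r)=u(r)/I_0$ and $A^{(n)}=u^{(n-1)}/I_0$ for $n\ge1$. It therefore suffices to show that $u^{(k)}(r)\to0$ as $r\to1^-$ for every $k\in\mathbb{N}$. I would follow the classical flat-function argument, by induction on $k$.

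The induction claim is that $u^{(k)}(r)=\frac{Q_k(r)}{(1-r^2)^{2k}}\,u(r)$ for some polynomial $Q_k$. The case $k=0$ is trivial with $Q_0=1$. For the inductive step, differentiate the claimed form, using
\[
u'(r)=-\frac{2r}{(1-r^2)^2}\,u(r)
\quad\text{and}\quad
\frac{\mathrm{d}}{\mathrm{d}r}(1-r^2)^{-2k}=\frac{4kr}{(1-r^2)^{2k+1}}.
\]
After putting everything over the common denominator $(1-r^2)^{2k+2}$, this gives
\[
Q_{k+1}=(1-r^2)^2Q_k'+4kr(1-r^2)Q_k-2rQ_k,
\]
which is again a polynomial.

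To conclude, set $s=\frac{1}{1-r^2}\to+\infty$ as $r\to1^-$. Then
\[
|u^{(k)}(r)|\le \sup_{[0,1]}|Q_k|\cdot s^{2k}e^{-s}\longrightarrow0,
\]
because exponential decay dominates any polynomial growth.

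The only step needing care is the bookkeeping in this induction. An alternative that avoids it is to note that $u$, extended by $0$ for $|t|\ge1$, is the standard smooth bump function. Its one-sided derivatives at $1$ all vanish, and since $u\in C^\infty(\mathbb{R})$, the derivatives $u^{(k)}(r)$ are continuous at $1$ and so tend to $u^{(k)}(1)=0$. I would present the explicit induction, which is self-contained, and mention the bump-function viewpoint as a remark.
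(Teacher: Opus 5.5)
Your proof is correct and follows the same route as the paper: the limit $A(r)\to 1$ comes from the normalization by $I_0$, and the vanishing of $A^{(n)}$ as $r\to1^-$ comes from writing each derivative of $\exp(-\frac{1}{1-r^2})$ as that exponential times a rational factor with poles only at $r=\pm1$, then using that exponential decay dominates. Your induction $u^{(k)}=Q_k\,(1-r^2)^{-2k}\,u$, whose recursion for $Q_{k+1}$ is correct, makes explicit the rational factor that the paper only asserts.
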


\begin{proof}
First, by the definition of the normalization constant $I_0$, the limit evaluates to $\lim_{r\rightarrow 1^-}A(r) = \frac{1}{I_0}\int_0^1 \text{exp}(-\frac{1}{1-t^2})\text{d}t = 1$, which evidently exists and is strictly positive. 

Second, its first derivative $A'(r) = \frac{1}{I_0}\text{exp}(-\frac{1}{1-r^2})$ is a standard smooth flat function at $r=1$. For any integer $k \ge 1$, the higher derivative $A^{(k+1)}(r)$ is the product of $\text{exp}(-\frac{1}{1-r^2})$ and a rational function of $r$ whose poles are only at $r=\pm 1$. As $r \to 1^-$, the rapid exponential decay strongly dominates any polynomial growth of the rational factor, ensuring that $\lim_{r\to 1^-}A^{(n)}(r) = 0$ for all $n \ge 1$.
\end{proof}

Thus, conditions i) and ii) in Proposition \ref{condition r=1} are completely satisfied, and we have established the smooth extension at $T^2$.

\printbibliography
\end{document}